\newtheorem{theorem}{Theorem}[section]
\newtheorem{prop}[theorem]{Proposition}
\newtheorem{cor}[theorem]{Corollary}
\newtheorem{lemma}[theorem]{Lemma}
\theoremstyle{definition}
\newtheorem{definition}[theorem]{Definition}
\newtheorem{remark}[theorem]{Remark}
\newtheorem{exm}[theorem]{Example}
\newtheorem{question}[theorem]{Question}
\theoremstyle{plain}
\newcommand{\rad}{\mathop{\mathrm{rad}}}
\newcommand{\trd}{\mathop{\mathrm{Trd}}}
\renewcommand{\phi}{\varphi}
\newcommand{\symd}{\mathop{\mathrm{Symd}}}
\newcommand{\sym}{\mathop{\mathrm{Sym}}}
\newcommand{\car}{\mathop{\mathrm{char}}}
\newcommand{\id}{\mathop{\mathrm{id}}}
\newcommand{\an}{\mathop{\mathrm{an}}}
\newcommand{\met}{\mathop{\mathrm{met}}}
\author{A.-H. Nokhodkar}
\title{Quadratic $D$-forms with applications to hermitian forms}
\begin{document}
\maketitle

\begin{abstract}
We study some properties of quadratic forms with values in a field whose underlying vector spaces are endowed with the structure of right vector spaces over a division ring extension of that field.
Some generalized notions of isotropy, metabolicity and isometry are introduced and used to find a Witt decomposition for these forms.
We then associate to every (skew) hermitian form over a division algebra with involution of the first kind a quadratic form defined on its underlying vector space.
It is shown that this quadratic form, with its generalized notionsو of isotropy and isometry, can be used to determine the isotropy behaviour and the isometry class of (skew) hermitian forms.\\
\\
\noindent
\emph{Mathematics Subject Classification:} 11E39, 11E04, 11E81, 16W10. \\
\emph{Keywords:}  Quadratic form, hermitian form, Witt decomposition, right
vector space, division algebra.\\
\end{abstract}

\section{Introduction}
An important problem in the theory of hermitian forms is to associate some quadratic form over the base field to a hermitian form capturing information about it.
This is inspired by the fact that hermitian forms over division algebras with involution are natural generalizations of bilinear and quadratic forms.
Using some results in the literature, one can find a solution to this problem in certain special cases, either in its current form or in the context of central simple algebras with involution (see for example \cite{jacob}, \cite{sah},
 \cite[\S 16]{knus}, \cite{bayer}, \cite{siv}, \cite{tignol}, \cite{dol}, \cite{mei} and \cite{meq}).
Among them, the most elementary case is the Jacobson's construction \cite{jacob} which can be summarized by saying that the theory of hermitian forms over a quaternion division algebra (or a quadratic separable extension) with the canonical involution reduces to the theory of quadratic forms.

The Jacobson's construction was generalized in \cite{meh} by associating to every (skew) hermitian form a system of quadratic forms given by transfers via linear maps from the space of (skew) symmetric elements to the base field.
This construction determines the isometry class of (skew) hermitian forms and their isotropy behaviour.
Also, using this system and the analogue of Springer's theorem for a system of two quadratic forms, it was shown in \cite{meh} that an anisotropic hermitian form over a quaternion algebra with involution of the first kind in characteristic two remains anisotropic over all odd degree extensions of the ground field.

In this work, we study another generalization of the Jacobson's construction.
Let $F$ be a field and let $(D,\sigma)$ be a division algebra with involution of the first kind over $F$.
Our aim is to associate to every (skew) hermitian space $(V,h)$ over $(D,\sigma)$ a quadratic form on $V$ with values in $F$ which reflects some important properties of $h$.
Such a quadratic form has an additional property, namely that its underlying vector space is endowed with the structure of a right vector space over $D$.
The approach we follow is to enrich the structure of quadratic forms with this property such that the $D$-structure of $V$ is taken into account.
This point of view will be developed in section \ref{sec-dforms} by introducing a class of quadratic forms, called quadratic $D$-forms.
Other related notions for quadratic forms, such as $D$-isometry, $D$-isotropy and $D$-metabolicity will also be defined accordingly.
In section \ref{sec-witt}, we study a decomposition theorem for quadratic $D$-forms, which is a generalization of the classical Witt decomposition theorem (see \cref{main}).
These elementary results show that the study of quadratic $D$-forms is interesting in itself.
They also motivate one to generalize other important properties of quadratic forms to the context of quadratic $D$-forms.
For example, one may consider the question of whether a $D$-anisotropic quadratic $D$-form remains $D$-anisotropic over odd degree extensions of the base field  (see \cref{ques}).

In section \ref{sec-pi}, we use quadratic $D$-forms to classify hermitian and skew hermitian forms.
Let $(D,\sigma)$ be a division algebra with involution of the first kind over a field $F$ and
let $(V,h)$ be a $\lambda$-hermitian space over $(D,\sigma)$, where $\lambda=\pm1$.
Suppose that either $D\neq F$ or $\lambda\neq-1$.
For every $F$-linear map $\pi:\sym_\lambda(D,\sigma)\rightarrow F$ whose restriction to $\symd_\lambda(D,\sigma)$ is nontrivial, we associate a quadratic $D$-form $q_{h,\pi}$ to $h$.
It is shown in \cref{isom} that two $\lambda$-hermitian forms $h$ and $h'$ are isometric if and only if $q_{h,\pi}$ and $q_{h',\pi}$ are $D$-isometric.
It is also shown in \cref{iso} and \cref{met} that $h$ is isotropic (resp. metabolic) if and only if $q_{h,\pi}$ is $D$-isotropic (resp. $D$-metabolic).
Using this, one may consider \cref{ques} as a generalization of the question of whether an anisotropic $\lambda$-hermitian form over $(D,\sigma)$ remains anisotropic over all odd degree extensions of $F$.
Finally, we consider quadratic forms which admits $D$ in section \ref{sec-admit}, to show that some familiar quadratic forms studied earlier in the literature are examples of quadratic $D$-forms.
It is shown that in the case where $\pi$ satisfies certain symmetry property, a quadratic $D$-form can be realised as the $\pi$-invariant of a $\lambda$-hermitian form if and only if it admits $D$ (see \cref{real}).

\section{Preliminaries}
Let $V$ be a vector space of finite dimension over a field $F$.
A {\it quadratic form} on $V$ is a map $q:V\rightarrow F$ satisfying $q(a u+bv)=a^2q(u)+b^2q(v)+ab\mathfrak{b}_q(u,v)$ for every $u,v\in V$ and $a,b\in F$, where $\mathfrak{b}_q:V\times V\rightarrow F$ is a bilinear form.
The pair $(V,q)$ is called a {\it quadratic space} over $F$ and the bilinear form $\mathfrak{b}_q$ is called the {\it polar form} of $q$.
For a subspace $W$ of $V$ we use the notation
\[W^{\perp_q}=\{v\in V\mid \mathfrak{b}_q(v,w)=0\ {\rm for \ all} \ w\in W\}.\]
We will simply denote $W^{\perp_q}$ by $W^\perp$ if the form $q$ is clear from the context.
Set $\rad V=V^\perp$.
The form $q$ is called {\it nonsingular} if $\rad V=\{0\}$, or equivalently, $\mathfrak{b}_q$ is nondegenerate.
Also, $q$ is called {\it totally singular} if $\mathfrak{b}_q$ is trivial.
A subspace $W$ of $V$ is called {\it nonsingular} if $W\cap W^\perp=\{0\}$, or equivalently, $q|_W$ is nonsingular.

The orthogonal sum of two quadratic spaces $(V,q)$ and $(V',q')$ is denoted by $(V\perp V',q\perp q')$.
An {\it  isometry} between $(V,q)$ and $(V',q')$, denoted by $(V,q)\simeq(V',q')$ or $q\simeq q'$ for short, is an isomorphism $f:V\rightarrow V'$ of vector spaces satisfying $q'(f(v))=q(v)$ for all $v\in V$.
For $\alpha \in F^\times$, the {\it scaled} quadratic space $(V,\alpha\cdot q)$ is defined by $\alpha\cdot q(v)=\alpha q(v)$ for all $v\in V$.
We will simply denote $(-1)\cdot q$ by $-q$.
Also, for $a_1,\cdots,a_n\in F^\times$ the quadratic form $a_1x_1^2+\cdots+a_nx_n^2$ is denoted by $\langle a_1,\cdots,a_n\rangle$.

A quadratic space $(V,q)$ (or the form $q$ itself) is called {\it isotropic} if there exists a nonzero vector $v\in V$ such that $q(v)=0$.
Otherwise, it is called {\it anisotropic}.
A subspace $W$ of $V$ is called {\it totally isotropic} if $q|_W$ is trivial.
A nonsingular quadratic space $(V,q)$ (or the form $q$ itself) is called {\it metabolic} if there exists a totally isotropic subspace $L$ of $V$ such that $\dim_DL=\frac{1}{2}\dim_DV$.
Such a subspace $L$ is called a {\it lagrangian} of $(V,q)$.
Note that $L^\perp=L$ for every lagrangian $L$ of $(V,q)$.

Let $A$ be a central simple algebra over a field $F$.
An {\it involution} on $A$ is an antiautomorphism $\sigma$ of $A$ satisfying $\sigma^2=\id$.
An involution $\sigma$ on $A$ is said to  be of {\it the first kind} if $\sigma|_F=\id$, and of {\it the second kind} otherwise.
An involution $\sigma$ of the first kind is called {\it symplectic} if it becomes adjoint to an alternating bilinear form over a splitting field of $A$.
Otherwise, it is called {\it orthogonal}.
For an algebra with involution $(A,\sigma)$ and $\lambda=\pm1$ we use the notation
\begin{align*}
  {\sym}_\lambda(A,\sigma)&=\{x\in A\mid \sigma(x)=\lambda x\},\\
  {\symd}_\lambda(A,\sigma)&=\{x+\lambda\sigma(x)\mid x\in A\}.
\end{align*}
It is easy to see that if $s\in\symd_\lambda(A,\sigma)$ then $\sigma(d) sd\in\symd_\lambda(A,\sigma)$ for all $d\in D$.
We will drop the index $\lambda$ from $\sym_\lambda(A,\sigma)$ and $\symd_\lambda(A,\sigma)$ if $\lambda=1$.

Let $(D,\sigma)$ be a finite dimensional division algebra with involution of the first kind over a field $F$ and let $\lambda=\pm1$.
Let $V$ be a finite dimensional right vector space over $D$.
A {\it $\lambda$-hermitian form} on $V$ is a bi-additive map $h:V\times V\rightarrow D$ such that $h(u\alpha,v\beta)=\sigma(\alpha) h(u,v)\beta$ and $h(v,u)=\lambda\sigma(h(u,v))$ for all $u,v\in V$ and $\alpha,\beta\in D$.
The pair $(V,h)$ is called a {\it $\lambda$-hermitian space} over $(D,\sigma)$.
If $\lambda=1$ (resp. $\lambda=-1$), we will call $h$ a {\it hermitian} (resp. {\it skew hermitian}) {\it form}.
Note that if $(D,\sigma)=(F,\id)$ then a $\lambda$-hermitian form on $(D,\sigma)$ is a symmetric or antisymmetric bilinear form over $F$.

A $\lambda$-hermitian space $(V,h)$ (or the form $h$ itself) is called {\it nondegenerate} if there is no nonzero vector $u\in V$ such that $h(u,v)=0$ for all $v\in V$.
The form $h$ is called {\it diagonalizable} if $(V,h)$ has an {\it orthogonal basis}, i.e.,
a basis $(v_1,\cdots,v_n)$ of $V$ over $D$ satisfying $h(v_i,v_j)=0$ for all $i\neq j$.
In this case, the form $h$ is  denoted by $\langle\alpha_1,\cdots,\alpha_n\rangle_{(D,\sigma)}$, where $\alpha_i=h(v_i,v_i)\in D$ for $i=1,\cdots,n$.

Let $(V,h)$ be a $\lambda$-hermitian space over $(D,\sigma)$.
It is easily seen that $h(v,v)\in\sym_\lambda(D,\sigma)$ for all $v\in V$.
We say that $(V,h)$ (or the form $h$ itself) is {\it even} if $h(v,v)\in\symd_\lambda(D,\sigma)$ for every $v\in V$.
If $\car F\neq2$ then all $\lambda$-hermitian forms are even, because $\sym_\lambda(D,\sigma)=\symd_\lambda(D,\sigma)$ in this case.

A $\lambda$-hermitian space $(V,h)$ (or the form $h$) is called {\it isotropic} if there exists a nonzero vector $v\in V$ such that $h(v,v)=0$.
Such a vector $v$ is called an {\it isotropic} vector.
The form $h$ is called {\it anisotropic} if it is not isotropic.
A nondegenerate $\lambda$-hermitian space $(V,h)$ (or the form $h$) is called {\it metabolic} if there exists a subspace $L$ of $V$ with $\dim_DL=\frac{1}{2}\dim_DV$ such that $h|_{L\times L}$ is trivial.
Such a subspace $L$ is called a {\it lagrangian} of $(V,h)$.

The orthogonal sum of two $\lambda$-hermitian spaces $(V,h)$ and $(V',h')$ over $(D,\sigma)$ is denoted by $(V\perp V',h\perp h')$.
An {\it  isometry} between $(V,h)$ and $(V',h')$, denoted by $(V,h)\simeq(V',h')$ or simply $h\simeq h'$, is an isomorphism $f:V\rightarrow V'$ of right vector spaces  over $D$ satisfying $h'(f(u),f(v))=h(u,v)$ for all $u,v\in V$.

\section{Quadratic $D$-forms}\label{sec-dforms}
Throughout this section, $F$ denotes a field of arbitrary characteristic and $D$ denotes a finite dimensional division algebra over $F$.

Let $V$ be a finite dimensional right vector space over $D$.
Then $V$ is also a vector space over $F$ and we may consider a quadratic form $q:V\rightarrow F$.
We say that $q$ is a {\it quadratic $D$-form} if $W^\perp$ is a vector space over $D$ for every $D$-subspace $W$ of $V$.
In this case, we say that $(V,q)$ is a {\it quadratic $D$-space}.
Note that if $D=F$, then a quadratic $D$-form is just a quadratic form.
Also, if $(V,q)$ is a quadratic $D$-space and $W$ is a $D$-subspace of $V$, then $q|_W$ is a quadratic $D$-form.
This follows from the equality $S^{\perp_{q|_W}}=S^{\perp_q}\cap W$ for every subspace $S$ of $W$.

\begin{lemma}\label{lemnew}
Let $V$ be a one-dimensional right vector space over $D$.
A quadratic form $q:V\rightarrow F$ is a quadratic $D$-form if and only if it is either nonsingular or totally singular.
\end{lemma}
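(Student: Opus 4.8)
The plan is to exploit the fact that a one-dimensional right $D$-space has essentially no proper $D$-subspaces, so that the defining condition for a quadratic $D$-form collapses to a single requirement. First I would record the basic observation that the only $D$-subspaces of $V$ are $\{0\}$ and $V$ itself: any nonzero $v\in V$ spans $V$ over $D$, so a nonzero $D$-subspace must be all of $V$. Consequently, among the subspaces $W$ appearing in the definition only two cases occur, and since $\{0\}^\perp=V$ is automatically a $D$-subspace, the whole requirement ``$W^\perp$ is a $D$-subspace for every $D$-subspace $W$'' reduces to the single statement that $V^\perp=\rad V$ is a $D$-subspace of $V$.

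For the forward direction I would assume that $q$ is a quadratic $D$-form and apply this to $W=V$: then $\rad V=V^\perp$ must be one of the two $D$-subspaces $\{0\}$ or $V$. In the first case $\rad V=\{0\}$, which is exactly the definition of $q$ being nonsingular. In the second case $\rad V=V$, meaning $\mathfrak{b}_q(v,w)=0$ for all $v,w\in V$; that is, $\mathfrak{b}_q$ is trivial and $q$ is totally singular.

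For the converse I would verify the defining condition directly in each of the two cases. When $q$ is nonsingular, $\rad V=\{0\}$; when $q$ is totally singular, $\mathfrak{b}_q$ is trivial and hence $V^\perp=V$. In either situation the only two orthogonal complements that arise, namely $\{0\}^\perp=V$ and $V^\perp\in\{\{0\},V\}$, are $D$-subspaces, so $q$ is a quadratic $D$-form.

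There is no substantive obstacle here: the entire content sits in the reduction of the first paragraph, after which both implications are immediate. The only point requiring a moment's care is confirming that one-dimensionality over $D$ forces the lattice of $D$-subspaces to be exactly $\{\{0\},V\}$, and that it is precisely this two-element lattice which makes ``$\rad V$ is a $D$-subspace'' equivalent to the dichotomy that $q$ is nonsingular or totally singular.
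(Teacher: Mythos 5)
Your proof is correct and follows exactly the paper's approach: the paper's entire proof is the remark that $V$ has exactly two $D$-subspaces, and your argument simply spells out how this reduces the defining condition to ``$\rad V\in\{\{0\},V\}$,'' which is the stated dichotomy. Nothing is missing; you have just made explicit what the paper leaves to the reader.
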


\begin{proof}
The claim follows from the fact that $V$ has exactly two $D$-subspaces.
\end{proof}

\begin{cor}\label{vd}
Let $(V,q)$ be a quadratic $D$-space.
Then for every $v\in V$, $q|_{vD}$ is either nonsingular or totally singular.
\end{cor}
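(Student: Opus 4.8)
The plan is to deduce this immediately from \cref{lemnew} together with the restriction principle recorded just before it, namely that the restriction of a quadratic $D$-form to any $D$-subspace is again a quadratic $D$-form (a consequence of the identity $S^{\perp_{q|_W}}=S^{\perp_q}\cap W$). The only content to organize is the reduction to the one-dimensional case covered by the lemma.

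First I would dispose of the degenerate case $v=0$: then $vD=\{0\}$ is the zero space, on which $q$ restricts to the trivial form, which is vacuously both nonsingular and totally singular, so there is nothing to prove. Henceforth assume $v\neq0$.

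Next, since $D$ is a division algebra and $v\neq0$, the set $vD=\{vd\mid d\in D\}$ is a one-dimensional right vector space over $D$, and it is a $D$-subspace of $V$. Because $(V,q)$ is a quadratic $D$-space, the restriction $q|_{vD}$ is again a quadratic $D$-form; this is exactly the observation made in the paragraph preceding \cref{lemnew}.

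Finally I would apply \cref{lemnew} to the one-dimensional quadratic $D$-space $(vD,q|_{vD})$: a quadratic form on a one-dimensional right $D$-space is a quadratic $D$-form precisely when it is nonsingular or totally singular. Hence $q|_{vD}$ is nonsingular or totally singular, as claimed. I do not anticipate any genuine obstacle here, since all the work is already packaged in the lemma and the restriction identity; the corollary is essentially a direct specialization to the subspaces of the form $vD$.
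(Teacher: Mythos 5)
Your proof is correct and follows essentially the same route as the paper, which simply cites \cref{lemnew}; you have merely made explicit the two ingredients the paper leaves implicit, namely that $q|_{vD}$ is again a quadratic $D$-form (via the identity $S^{\perp_{q|_W}}=S^{\perp_q}\cap W$) and the trivial case $v=0$.
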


\begin{proof}
The result follows from \cref{lemnew}.
\end{proof}

\begin{remark}\label{rems}
The orthogonal sum of quadratic $D$-spaces is not necessarily a quadratic $D$-space if $D\neq F$.
To construct a counter example, let $\{d_1,\cdots,d_n\}$ be a basis of $D$ over $F$, where $n=\dim_FD\geqslant4$.
Consider the $F$-subspace
\[U=d_3F+\cdots+d_nF\subseteq D.\]
Let $\rho$ be a nonsingular quadratic form on $U$.
Let $\phi$ and $\phi'$ be nonsingular quadratic forms on $d_1F+d_2F$ satisfying
\begin{equation}\label{eq10}
\mathfrak{b}_\phi(d_1,d_2)=1\quad {\rm and} \quad \mathfrak{b}_{\phi'}(d_1,d_2)\neq1.
\end{equation}
Set $q=\phi\perp\rho$ and $q'=\phi'\perp\rho$.
Then $q$ and $q'$ are quadratic $D$-forms on $V$ by \cref{lemnew}.
However, $q\perp (-q')$ is not a quadratic $D$-form.
Indeed, in the quadratic space $(V\perp V,q\perp (-q'))$, one has \[(d_3,d_3)\in ((1,1)D)^\perp,\]
while $(d_1,d_1)\notin ((1,1)D)^\perp$, because
$\mathfrak{b}_{q\perp (-q')}((d_1,d_1),(d_2,d_2))\neq0$
by (\ref{eq10}).
\end{remark}

\begin{definition}
Let $(V,q)$ and $(V',q')$ be two quadratic $D$-spaces.
We say that $q$ and $q'$ are {\it $D$-compatible} if $q\perp q'$ is a quadratic $D$-form.
\end{definition}

Let $V$ and $V'$ be two finite-dimensional right vector spaces over $D$ and let $q:V\rightarrow F$ and $q':V'\rightarrow F$ be quadratic forms.
We say that $q$ is {\it $D$-isometric} to $q'$ if there exists an isomorphism $f:V\rightarrow V'$ of right vector spaces over $D$ such that $q'(f(v))=q(v)$ for every $v\in V$.
In this case, we write $(V,q)\simeq_D(V',q')$, or simply $q\simeq_D q'$.
Also, the map $f$ is called a {\it $D$-isometry}.
It is readily verified that if $q\simeq_D q'$ then $q$ is a quadratic $D$-form if and only if $q'$ is a quadratic $D$-form.

\begin{lemma}\label{orth}
Let $(V,q)$ be a quadratic $D$-space and let $W\subseteq V$ be a nonsingular $D$-subspace of $V$.
Then $(V,q)\simeq_D (W,q|_W)\perp (W^\perp,q|_{W^\perp})$.
In particular, $q|_W$ and $q|_{W^\perp}$ are $D$-compatible.
\end{lemma}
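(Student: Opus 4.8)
The plan is to show that $V$ decomposes as an internal direct sum $V = W \oplus W^\perp$ of right $D$-vector spaces, after which the addition map $(w,w')\mapsto w+w'$ will serve as the desired $D$-isometry; the $D$-compatibility assertion will then follow formally.

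First I would record that $W^\perp$ is itself a $D$-subspace of $V$: this is exactly the defining property of a quadratic $D$-form applied to the $D$-subspace $W$. Since $W$ is nonsingular, $q|_W$ is nonsingular, so the symmetric bilinear form $\mathfrak{b}_q|_{W\times W}$ is nondegenerate; in particular $W\cap W^\perp=\{0\}$. To see that $W+W^\perp=V$, I would use that this nondegeneracy makes the $F$-linear map $W\to\Hom_F(W,F)$, $w'\mapsto\mathfrak{b}_q(w',-)|_W$, injective with kernel $W\cap W^\perp=\{0\}$, hence an isomorphism by comparing the finite $F$-dimensions of the two sides. Consequently, for any $v\in V$ the functional $\mathfrak{b}_q(v,-)|_W$ is realised by some $w_0\in W$, so that $v-w_0\in W^\perp$ and $v=w_0+(v-w_0)\in W+W^\perp$. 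As $W$ and $W^\perp$ are both $D$-subspaces, this yields $V=W\oplus W^\perp$ as right $D$-vector spaces.

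Next I would verify that the resulting $D$-isomorphism $f\colon (W,q|_W)\perp(W^\perp,q|_{W^\perp})\to(V,q)$, $f(w,w')=w+w'$, preserves the quadratic form. Applying the polarization identity with $a=b=1$ gives $q(w+w')=q(w)+q(w')+\mathfrak{b}_q(w,w')$, and since $w'\in W^\perp$ and $\mathfrak{b}_q$ is symmetric, $\mathfrak{b}_q(w,w')=0$; thus $q(f(w,w'))=q|_W(w)+q|_{W^\perp}(w')$, which is precisely the value of the orthogonal sum on $(w,w')$. Hence $f$ is a $D$-isometry and $(V,q)\simeq_D(W,q|_W)\perp(W^\perp,q|_{W^\perp})$.

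Finally, for the \emph{in particular} clause: since $q$ is a quadratic $D$-form and the property of being a quadratic $D$-form is invariant under $D$-isometry (as observed right after the definition of $D$-isometry), the form $q|_W\perp q|_{W^\perp}$ is a quadratic $D$-form, i.e.\ $q|_W$ and $q|_{W^\perp}$ are $D$-compatible. I expect the only delicate point to be the passage from the $F$-linear splitting $V=W+W^\perp$ to a genuine decomposition of $D$-modules; in view of \cref{rems} this is not automatic for arbitrary subspaces, but here it is guaranteed precisely because the quadratic $D$-form hypothesis forces $W^\perp$ to be a $D$-subspace.
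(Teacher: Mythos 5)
Your proof is correct and follows essentially the same route as the paper: both establish $V=W\oplus W^\perp$ as right $D$-spaces (using that $W^\perp$ is a $D$-subspace by the quadratic $D$-form hypothesis) and then check that the addition map $(w,w')\mapsto w+w'$ is a $D$-isometry, with $D$-compatibility following from the invariance of the quadratic $D$-form property under $D$-isometry. The only difference is that you spell out the standard nondegeneracy argument for $W+W^\perp=V$, which the paper simply quotes.
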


\begin{proof}
Since $W$ is nonsingular, we have $W+W^\perp=V$ and $W\cap W^\perp=\{0\}$.
As $W$ and $W^\perp$ are $D$-subspaces of $V$, the natural isometry $f:(W,q|_W)\perp (W^\perp,q|_{W^\perp})\simeq (V,q)$ defined by $f((w,w'))=w+w'$ is an isomorphism of right vector spaces.
Hence, it is a $D$-isometry.
\end{proof}

The following result is similarly verified.
\begin{lemma}\label{dec}
Let $(V,q)$ be a quadratic $D$-space and let $W$ be any $D$-subspace of $V$ for which $V=\rad V\oplus W$.
Then $q|_W$ is nonsingular and $q\simeq_D q|_{\rad V}\perp q|_W$.
\end{lemma}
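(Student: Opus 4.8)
The plan is to follow the pattern of \cref{orth}, producing the splitting $D$-isometry explicitly and then checking the nonsingularity claim separately. First I would record that the radical $\rad V=V^\perp$ is itself a $D$-subspace of $V$: since $V$ is a $D$-subspace of itself and $q$ is a quadratic $D$-form, $V^\perp$ is a $D$-space by definition. Hence the hypothesis $V=\rad V\oplus W$ expresses $V$ as a direct sum of two $D$-subspaces, and the map $f\colon(\rad V,q|_{\rad V})\perp(W,q|_W)\to(V,q)$ given by $f((r,w))=r+w$ is an isomorphism of right $D$-vector spaces.

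Next I would verify that $f$ is an isometry. Applying the defining identity of a quadratic form with $a=b=1$ gives $q(r+w)=q(r)+q(w)+\mathfrak{b}_q(r,w)$, and the cross term $\mathfrak{b}_q(r,w)$ vanishes because $r\in\rad V=V^\perp$. Thus $q(f((r,w)))=q(r)+q(w)=(q|_{\rad V}\perp q|_W)((r,w))$, so $f$ is a $D$-isometry and $q\simeq_D q|_{\rad V}\perp q|_W$.

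It then remains to show that $q|_W$ is nonsingular, that is, that the radical of $(W,q|_W)$, which by the identity $S^{\perp_{q|_W}}=S^{\perp_q}\cap W$ equals $W^\perp\cap W$, is trivial. I would take $w\in W$ with $\mathfrak{b}_q(w,w')=0$ for all $w'\in W$, and for an arbitrary $v\in V$ write $v=r+w'$ with $r\in\rad V$ and $w'\in W$. Then $\mathfrak{b}_q(w,v)=\mathfrak{b}_q(w,r)+\mathfrak{b}_q(w,w')$, where the second summand is $0$ by the choice of $w$, and the first is $0$ since $r\in V^\perp$ and $\mathfrak{b}_q$ is symmetric. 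Hence $\mathfrak{b}_q(w,v)=0$ for every $v\in V$, so $w\in V^\perp=\rad V$, and directness of the sum $\rad V\oplus W$ forces $w=0$.

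I do not expect a genuine obstacle here; the argument is essentially a transcription of the one for \cref{orth}. The only point that needs care is the dual role played by the radical: it appears both as an honest $D$-summand of $V$, which is why its being a $D$-subspace must be noted, and as the subspace whose triviality inside $W$ is exactly what nonsingularity of $q|_W$ amounts to.
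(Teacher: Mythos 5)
Your proof is correct and matches the paper's intent: the paper gives no explicit argument for \cref{dec}, stating only that it is ``similarly verified'' to \cref{orth}, and your proposal is precisely that verification --- noting $\rad V=V^\perp$ is a $D$-subspace since $q$ is a quadratic $D$-form, checking the natural map is a $D$-isometry via vanishing of the cross term, and deducing nonsingularity of $q|_W$ from $W^{\perp_q}\cap W\subseteq\rad V\cap W=\{0\}$. Nothing is missing.
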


Let $V$ be a right vector space over $D$ and let $q:V\rightarrow F$ be a quadratic form.
A basis $\{v_1,\cdots,v_n\}$ of $V$ over $D$ is called an {\it orthogonal $D$-basis} of $(V,q)$ if for every $i\neq j$, the $D$-subspace $v_iD$ is orthogonal to $v_jD$, i.e., $\mathfrak{b}_q(v_id,v_jd')=0$ for every $d,d'\in D$.
In this case, we say that $(V,q)$ is {\it $D$-diagonalizable}.

\begin{lemma}\label{total}
Let $(V,q)$ be a quadratic $D$-space.
Suppose that either $D\neq F$ or $\car F\neq2$.
If the restriction $q|_{vD}$ is totally singular for all $v\in V$, then $q$ is totally singular.
\end{lemma}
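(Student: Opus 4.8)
The plan is to show directly that the polar form $\mathfrak{b}_q$ vanishes identically; since $\mathfrak{b}_q$ is symmetric it suffices to prove $\mathfrak{b}_q(u,w)=0$ for an arbitrary pair $u,w\in V$. The engine of the argument is to feed the hypothesis into the single $D$-subspace $uD+wD$. Concretely, for all $\alpha,\beta,d,d'\in D$ the vector $z=u\alpha+w\beta$ has $q|_{zD}$ totally singular, so $\mathfrak{b}_q(zd,zd')=0$. Expanding $zd=u\alpha d+w\beta d$ and $zd'=u\alpha d'+w\beta d'$ by $F$-bilinearity and discarding the terms $\mathfrak{b}_q(u\alpha d,u\alpha d')$ and $\mathfrak{b}_q(w\beta d,w\beta d')$, which vanish because $q|_{uD}$ and $q|_{wD}$ are totally singular, and using symmetry of $\mathfrak{b}_q$ to merge the two cross terms, I obtain the master relation
\[
\mathfrak{b}_q(u\alpha d,\,w\beta d')+\mathfrak{b}_q(u\alpha d',\,w\beta d)=0
\]
valid for all $\alpha,\beta,d,d'\in D$.

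If $\car F\neq2$, I simply take $d=d'=1$ in the master relation to get $2\mathfrak{b}_q(u\alpha,w\beta)=0$, hence $\mathfrak{b}_q(u,w)=0$; this settles the statement for every $D$ at once. (Equivalently, expanding $\mathfrak{b}_q(u+w,u+w)=0$ together with $\mathfrak{b}_q(u,u)=\mathfrak{b}_q(w,w)=0$ already yields $2\mathfrak{b}_q(u,w)=0$.)

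The remaining, and genuinely harder, case is $\car F=2$, where the hypothesis forces $D\neq F$ and the two cross terms coincide, so the naive polarization gives nothing and I must exploit the multiplicative structure of $D$. Here I would introduce the $F$-linear functional $\phi:D\to F$ defined by $\phi(t)=\mathfrak{b}_q(ut,w)$. Specializing the master relation with $d'=1$ gives $\mathfrak{b}_q(u\alpha d,w\beta)=\mathfrak{b}_q(u\alpha,w\beta d)$, from which one reads off both $\phi(t)=\mathfrak{b}_q(u,wt)$ and the clean formula $\mathfrak{b}_q(u\alpha,w\beta)=\phi(\alpha\beta)$, so that $\mathfrak{b}_q(u,w)=\phi(1)$. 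Feeding this formula back into the master relation with $\beta=1$ yields $\phi(\alpha dd')=\phi(\alpha d'd)$ for all $\alpha,d,d'$, i.e.\ $\phi$ annihilates every element $\alpha(dd'-d'd)$. Using that $D$ is a noncommutative division algebra, I pick a nonzero commutator $c=dd'-d'd$; since $D$ is a division ring the left ideal $Dc$ is all of $D$, so $\phi$ vanishes on $D$, and in particular $\phi(1)=\mathfrak{b}_q(u,w)=0$. As $u,w$ were arbitrary, $\mathfrak{b}_q\equiv0$ and $q$ is totally singular.

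I expect the main obstacle to be precisely this characteristic-two step. The symmetric bilinear form alone carries too little information (a priori a nondegenerate $\mathfrak{b}_q$ with every $D$-line isotropic is not obviously absurd), and the conclusion is recovered only by translating ``all $D$-lines are totally singular'' into the vanishing of $\phi$ on commutators and then invoking that a nonzero commutator generates $D$ as a one-sided ideal. Thus the delicate part is organizing the specializations of the master relation so that the functional $\phi$ and its commutator-annihilating property emerge cleanly, and it is exactly here that the hypothesis $D\neq F$ (through the noncommutativity of $D$) does the essential work; everything else is routine bookkeeping.
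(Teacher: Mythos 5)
Your proof is correct and takes essentially the same route as the paper: polarizing the total singularity of the $D$-lines $zD$ yields the balanced identity $\mathfrak{b}_q(u\alpha d,w\beta)=\mathfrak{b}_q(u\alpha,w\beta d)$ (the paper's $\mathfrak{b}_q(ud,v)=\mathfrak{b}_q(u,vd)$), from which $\mathfrak{b}_q$ is seen to vanish on all multiples of a commutator, and then the invertibility of a nonzero commutator $c=dd'-d'd$ finishes the characteristic-two case exactly as in the paper (your ``$\phi$ vanishes on $Dc=D$'' is the paper's ``$vd'\in\rad V$ for all $v$, hence $\rad V=V$''). Your handling of $\car F\neq 2$ via $2\mathfrak{b}_q(u,w)=0$ is a trivial variant of the paper's observation that $q$ itself is already zero there, and your appeal to noncommutativity of $D$ when $D\neq F$ is the same implicit use of $D$ being central that the paper makes.
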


\begin{proof}
Observe first that if $\car F\neq2$, the claim is evident, because $q$ is trivial in this case.
Suppose that $\car F=2$, and hence $D\neq F$.
The hypothesis implies that
$\mathfrak{b}_q(v,vd)=0$ for all $v\in V$ and $d\in D$.
Applying this relation to the vector $u+v\in V$, one concludes that
\begin{equation}\label{eq11}
\mathfrak{b}_q(u,vd)=\mathfrak{b}_q(ud,v)\quad {\rm for\ all} \ u,v\in V \ {\rm and}\ d\in D.
\end{equation}
Let $d_1,d_2\in D$ and $u,v\in V$.
Then using (\ref{eq11}) we have
\begin{align*}
\mathfrak{b}_q(u,vd_1d_2)&=\mathfrak{b}_q(u,(vd_1)d_2)=\mathfrak{b}_q(ud_2,vd_1)\\
&=\mathfrak{b}_q((ud_2)d_1,v)=\mathfrak{b}_q(u(d_2d_1),v)=\mathfrak{b}_q(u,vd_2d_1).
\end{align*}
Hence,
\begin{equation}\label{eq12}
\mathfrak{b}_q(u,v(d_1d_2-d_2d_1))=0 \quad {\rm for\ all} \ u,v\in V \ {\rm and}\ d_1,d_2\in D.
\end{equation}
Choose $d_1,d_2\in D$ such that $d_1d_2\neq d_2d_1$ and set $d'=d_1d_2-d_2d_1\in D$.
Let $v\in V$ be an arbitrary vector.
Then (\ref{eq12}) implies that
$\mathfrak{b}_q(u,vd')=0$ for every $u\in V$, hence $vd'\in \rad V$.
Since $\rad V$ is a $D$-space and $d'\neq0$, one concludes that $v\in \rad V$.
It follows that $\rad V=V$, i.e., $q$ is totally singular.
\end{proof}

\begin{prop}
If $D\neq F$ or $\car F\neq2$ then every quadratic $D$-space is $D$-diagonalizable.
\end{prop}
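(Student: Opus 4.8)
The plan is to argue by induction on $n=\dim_D V$, the cases $n\leqslant1$ being immediate (a single vector spans its own orthogonal $D$-basis). For the inductive step, the dichotomy furnished by \cref{vd}---that $q|_{vD}$ is either nonsingular or totally singular for every $v$---suggests splitting into two cases according to whether some line $vD$ is nonsingular.

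First I would dispose of the degenerate case. Suppose that $q|_{vD}$ is totally singular for every $v\in V$. Then \cref{total}, whose hypothesis is precisely the standing assumption that $D\neq F$ or $\car F\neq2$, forces $q$ to be totally singular, i.e.\ $\mathfrak{b}_q$ is trivial. In that situation \emph{any} $D$-basis of $V$ is automatically an orthogonal $D$-basis, since the orthogonality requirement $\mathfrak{b}_q(v_id,v_jd')=0$ holds trivially. This is exactly where the characteristic/degree hypothesis is needed, and it is the step I expect to carry the real content of the statement.

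Otherwise there is some $v\in V$ for which $q|_{vD}$ is not totally singular; by \cref{vd} it is then nonsingular, so $vD$ is a nonsingular $D$-subspace of $V$. Applying \cref{orth} yields a $D$-isometry $(V,q)\simeq_D(vD,q|_{vD})\perp\bigl((vD)^\perp,q|_{(vD)^\perp}\bigr)$, and in particular $V=vD\oplus(vD)^\perp$ as $D$-spaces, so $\dim_D(vD)^\perp=n-1$. The point that lets the induction proceed is that $(vD)^\perp$ is again a quadratic $D$-space: it is a $D$-subspace because $q$ is a quadratic $D$-form, and the restriction of a quadratic $D$-form to a $D$-subspace is again a quadratic $D$-form, as recorded right after the definition via $S^{\perp_{q|_W}}=S^{\perp_q}\cap W$.

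By the induction hypothesis $(vD)^\perp$ admits an orthogonal $D$-basis $\{v_2,\cdots,v_n\}$. Since $vD$ is orthogonal to $(vD)^\perp$ by construction, adjoining $v$ produces an orthogonal $D$-basis $\{v,v_2,\cdots,v_n\}$ of $(V,q)$, completing the induction. The only bookkeeping to watch is that $(vD)^\perp$ genuinely inherits the quadratic $D$-space structure and has the correct dimension; once \cref{total} settles the totally singular case, the remainder is a routine orthogonal splitting.
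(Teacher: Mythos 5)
Your proof is correct and follows essentially the same route as the paper's: locate a line $vD$ on which $q$ is nonsingular via \cref{vd} and \cref{total}, split it off with \cref{orth}, and induct on $\dim_D V$. The only (harmless) difference is organizational: the paper first separates the radical using \cref{dec} and reduces to the nonsingular case, whereas you absorb the degenerate situation into the induction by observing that a totally singular form is trivially $D$-diagonalizable.
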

\begin{proof}
Let $(V,q)$ be a quadratic $D$-space.
In view of \cref{dec}, it suffices to consider the case where $q$ is nonsingular.
By \cref{total} and \cref{vd}, there exists $v\in V$ such that $q|_{vD}$ is nonsingular.
Hence, $q\simeq_D q|_{vD}\perp q|_{(vD)^\perp}$ by \cref{orth}.
The result now follows by induction on $\dim_DV$.
\end{proof}

\section{Witt decomposition of quadratic $D$-forms}\label{sec-witt}
We continue to assume that $D$ is a finite dimensional division algebra over a field $F$.

Let $V$ be a right vector space over $D$ and let $q:V\rightarrow F$ be a quadratic form.
A nonzero vector $v\in V$ is called {\it $D$-isotropic} if $q|_{vD}=0$, i.e., $vD$ is a totally isotropic subspace of $V$.
We say that $(V,q)$ (or simply $q$) is {\it $D$-isotropic} if there exists a $D$-isotropic vector $v\in V$.
Otherwise, $q$ is called {\it $D$-anisotropic}.
We say that $(V,q)$ (or the form $q$ itself) is {\it $D$-metabolic} if
$(i)$ $q$ is nonsingular;
$(ii)$ there exists a totally isotropic $D$-subspace $L$ of $V$ such that $\dim_DL=\frac{1}{2}\dim_DV$.
Such a subspace $L$ is called a {\it $D$-lagrangian} of $(V,q)$.
Note that for every $D$-lagrangian $L$ of $(V,q)$ we have $L^\perp=L$.
Clearly, the $D$-isotropy and $D$-metabolicity are preserved under $D$-isometry.

\begin{lemma}\label{lemma}
Let $(V,q)$ be a nonsingular quadratic $D$-space.
Let $v\in V$ be a $D$-isotropic vector.
Then for every $w\in V\setminus (vD)^\perp$, the restriction $q|_{vD+wD}$ is $D$-metabolic.
\end{lemma}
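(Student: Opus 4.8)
The plan is to show that the obvious candidate, namely $vD$ itself, is a $D$-lagrangian of $q|_{vD+wD}$, so that the only real work is checking nonsingularity. First I would record that since $vD$ is totally isotropic we have $vD\subseteq(vD)^\perp$, so the hypothesis $w\notin(vD)^\perp$ forces $w\notin vD$; hence $U:=vD+wD$ is $2$-dimensional over $D$. As $vD$ is totally isotropic with $\dim_D vD=1=\frac12\dim_D U$, it will automatically be a $D$-lagrangian of $(U,q|_U)$ once $q|_U$ is known to be nonsingular. Thus the whole statement reduces to proving that the radical $R$ of $(U,q|_U)$ is trivial. Using the identity $S^{\perp_{q|_U}}=S^{\perp_q}\cap U$ recorded earlier (with $S=U$), this radical equals $U\cap U^\perp$, which is a $D$-subspace because both $U$ and $U^\perp$ are $D$-subspaces (the latter since $q$ is a quadratic $D$-form).

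Next I would localize $R$ inside the line $vD$. Take $u=va+wb\in U\cap U^\perp$. Since $u\in U^\perp\subseteq(vD)^\perp$, we have $\mathfrak{b}_q(u,vc)=0$ for all $c\in D$; and because $vD$ is totally isotropic, $\mathfrak{b}_q$ vanishes on $vD\times vD$, so this relation collapses to $\mathfrak{b}_q(wb,vc)=0$ for all $c$, i.e.\ $wb\in(vD)^\perp$. If $b\neq0$ then $b$ is a unit of $D$, so $wbD=wD$, and since $(vD)^\perp$ is a $D$-subspace containing $wb$ it would then contain $w$, contradicting $w\notin(vD)^\perp$. Hence $b=0$ and $u\in vD$, which proves $R\subseteq vD$.

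The main obstacle is excluding the remaining possibility $R=vD$, and this is exactly the step that genuinely uses the $D$-form hypothesis, since it relies on a symmetry that fails for arbitrary quadratic forms: I claim $w\notin(vD)^\perp\iff v\notin(wD)^\perp$. To see this, suppose $v\in(wD)^\perp$; as $(wD)^\perp$ is a $D$-subspace we get $vD\subseteq(wD)^\perp$, so $\mathfrak{b}_q(vd,w)=0$ for all $d\in D$, and by symmetry of $\mathfrak{b}_q$ this says $w\in(vD)^\perp$; the converse is identical. Taking contrapositives, $w\notin(vD)^\perp$ yields $v\notin(wD)^\perp$, and since $U^\perp\subseteq(wD)^\perp$ this gives $v\notin U^\perp$, hence $v\notin R$. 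As $R$ is a $D$-subspace of the one-dimensional $D$-space $vD$ that does not contain $v$, it must be $\{0\}$. Therefore $q|_U$ is nonsingular and $vD$ is a $D$-lagrangian, so $q|_{vD+wD}$ is $D$-metabolic.
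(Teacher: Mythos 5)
Your proof is correct and takes essentially the same route as the paper: both reduce the statement to showing $W\cap W^\perp=\{0\}$ for $W=vD+wD$ (with $vD$ as the obvious $D$-lagrangian), kill the $w$-component of an element of the radical using that $(vD)^\perp$ is a $D$-subspace, and kill the $v$-component using that $(wD)^\perp$ is a $D$-subspace together with the symmetry of $\mathfrak{b}_q$. Your explicit claim $w\notin(vD)^\perp\iff v\notin(wD)^\perp$ and the concluding ``$R$ is a proper $D$-subspace of the line $vD$'' step are just a repackaging of the paper's direct computation with $u=vd_1+wd_2$.
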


\begin{proof}
Set $W=vD+wD$.
Since $vD\subseteq (vD)^\perp$ and $w\notin (vD)^\perp$, $W$ is a two-dimensional $D$-subspace of $V$.
Hence, it suffices to show that $q|_{W}$ is nonsingular.
Let $u=vd_1+wd_2\in W\cap W^\perp$, where $d_1,d_2\in D$.
The relation $\mathfrak{b}_q(u,vd)=0$ implies that $\mathfrak{b}_q(wd_2,vd)=0$ for all $d\in D$, because $q|_{vD}=0$.
Since $w\notin (vD)^\perp$ and $(vD)^\perp$ is a $D$-subspace, we obtain $d_2=0$, hence $u=vd_1$.
As $w\in W$, one has $vd_1\in (wD)^\perp$.
If $d_1\neq0$, then $vD\subseteq(wD)^\perp$, because $(wD)^\perp$ is a $D$-subspace of $V$.
This contradicts $w\notin (vD)^\perp$.
Hence, $d_1=0$, which implies that $W\cap W^\perp=\{0\}$.
\end{proof}

\begin{cor}\label{dis}
Let $(V,q)$ be a nonsingular quadratic $D$-space.
If $q$ is $D$-isotropic then there exists a $D$-subspace $W$ of $V$ with $\dim_DW=2$ such that $q|_W$ is $D$-metabolic and $q\simeq_Dq|_W\perp q|_{W^\perp}$.
Moreover, if $v$ is a $D$-isotropic vector of $(V,q)$ then the subspace $W$ can be chosen in such a way that $v\in W$.
\end{cor}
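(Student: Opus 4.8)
The plan is to reduce the statement to the two results just proved: \cref{lemma}, which manufactures a two-dimensional $D$-metabolic piece from a $D$-isotropic vector $v$ together with any companion vector lying outside $(vD)^\perp$, and \cref{orth}, which splits off any nonsingular $D$-subspace as an orthogonal $D$-summand. The key observation tying these together is that $D$-metabolicity includes nonsingularity by definition, so once \cref{lemma} produces the subspace $W$, the hypothesis of \cref{orth} is automatically satisfied.

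First I would fix a $D$-isotropic vector $v$ of $(V,q)$; for the ``moreover'' clause I simply take $v$ to be the prescribed vector, so that part requires no additional argument. The only preliminary step needing a proof, rather than a direct citation, is to produce some $w\in V\setminus (vD)^\perp$, that is, to check that $(vD)^\perp\neq V$. This is precisely where nonsingularity is used: if one had $(vD)^\perp=V$, then in particular $\mathfrak{b}_q(u,v)=0$ for every $u\in V$, forcing $v\in V^\perp=\rad V=\{0\}$ and contradicting $v\neq0$. Hence such a $w$ exists.

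With $v$ and $w$ in hand, \cref{lemma} gives that $W:=vD+wD$ is a two-dimensional $D$-subspace on which $q|_W$ is $D$-metabolic, and in particular nonsingular. Thus $W$ is a nonsingular $D$-subspace of $V$, and \cref{orth} yields the decomposition $q\simeq_D q|_W\perp q|_{W^\perp}$. Since $v\in W$ by construction, the ``moreover'' assertion is settled simultaneously. I do not anticipate any genuine obstacle: the corollary is essentially a bookkeeping combination of \cref{lemma} and \cref{orth}, with the verification that $(vD)^\perp\neq V$ being the one short nontrivial point.
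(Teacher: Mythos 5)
Your proposal is correct and follows the paper's own proof exactly: the paper likewise picks $w$ with $\mathfrak{b}_q(v,w)\neq0$ (equivalent to $w\notin(vD)^\perp$, which your radical argument justifies) and then cites \cref{lemma} and \cref{orth} for $W=vD+wD$. Your only addition is to spell out the use of nonsingularity in producing $w$, a detail the paper leaves implicit.
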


\begin{proof}
Choose a vector $w\in V$ such that $\mathfrak{b}_q(v,w)\neq0$.
Then the $D$-subspace $W=vD+wD$ is the required subspace, thanks to Lemmas \ref{lemma} and \ref{orth}.
\end{proof}

\begin{lemma}\label{2}
Every $D$-metabolic quadratic $D$-form is $D$-isometric to ortho\-gonal sums of two-dimensional $D$-metabolic quadratic $D$-forms.
\end{lemma}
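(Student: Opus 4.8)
The plan is to argue by induction on $\dim_D V$, peeling off one two-dimensional $D$-metabolic plane at a time. Since $q$ is $D$-metabolic it is nonsingular and carries a $D$-lagrangian $L$ with $\dim_D L=\tfrac12\dim_D V=:n$; in particular $L$ is a totally isotropic $D$-subspace with $L^\perp=L$, so $\mathfrak{b}_q(x,y)=0$ for all $x,y\in L$. If $n=0$ there is nothing to prove, so assume $n\ge 1$ and pick a nonzero $v\in L$. Then $q|_{vD}=0$, i.e.\ $v$ is $D$-isotropic, and \cref{dis} produces a two-dimensional $D$-subspace $W$ with $v\in W$ such that $q|_W$ is $D$-metabolic and $q\simeq_D q|_W\perp q|_{W^\perp}$. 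Since $q$ and $q|_W$ are nonsingular, so is $q|_{W^\perp}$, and $\dim_D W^\perp=2(n-1)$.

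The heart of the argument is to show that $q|_{W^\perp}$ is again $D$-metabolic, for which I claim that $L':=L\cap W^\perp$ is a $D$-lagrangian of $W^\perp$. It is visibly a $D$-subspace, and it is totally isotropic because $L'\subseteq L$. To describe $L'$ concretely, note first that $L\subseteq(vD)^\perp$: for $\ell\in L$ and $d\in D$ one has $vd\in L=L^\perp$, whence $\mathfrak{b}_q(\ell,vd)=0$. Writing $W=vD+wD$, we get $L'=L\cap(vD)^\perp\cap(wD)^\perp=L\cap(wD)^\perp$. Now I would pin down $\dim_F L'$ by two opposite inequalities. On one hand, $L'$ is a totally isotropic subspace of the nonsingular quadratic $F$-space $(W^\perp,q|_{W^\perp})$, so the standard half-dimension bound gives $\dim_F L'\le\tfrac12\dim_F W^\perp=(n-1)\dim_F D$. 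On the other hand, $L'=L\cap(wD)^\perp$ is the kernel of the $F$-linear map $L\to\Hom_F(wD,F)$ sending $\ell$ to $\mathfrak{b}_q(\ell,-)|_{wD}$, so rank--nullity yields $\dim_F L'\ge\dim_F L-\dim_F\Hom_F(wD,F)=n\dim_F D-\dim_F D=(n-1)\dim_F D$. Hence $\dim_F L'=(n-1)\dim_F D$, that is $\dim_D L'=n-1=\tfrac12\dim_D W^\perp$, so $L'$ is a $D$-lagrangian and $q|_{W^\perp}$ is $D$-metabolic.

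With this in hand the induction closes immediately: by the inductive hypothesis $q|_{W^\perp}$ is $D$-isometric to an orthogonal sum of two-dimensional $D$-metabolic $D$-forms, and adjoining the plane $q|_W$ (acting by the identity on $W$) gives the same conclusion for $q$ through $q\simeq_D q|_W\perp q|_{W^\perp}$. The step I expect to require the most care is the dimension count for $L'$: because $\mathfrak{b}_q$ is only $F$-bilinear and not $D$-bilinear, one cannot manipulate $D$-dimensions by a naive $D$-linear orthogonality argument, and the exact value $\dim_D L'=n-1$ must instead be squeezed out by combining the half-dimension bound for totally isotropic subspaces with the rank--nullity estimate computed over $F$.
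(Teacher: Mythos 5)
Your proof is correct, and it shares the induction skeleton of the paper's proof (peel off a two-dimensional $D$-metabolic plane through a vector of the lagrangian via \cref{dis}, then recurse), but it differs in the one step that requires real work: showing the complement $q|_{W^\perp}$ is again $D$-metabolic. The paper avoids any computation here by engineering the choice of the second vector: fixing a $D$-basis $\{v_1,\dots,v_n\}$ of $L$, it picks $u_1\in(v_2D+\cdots+v_nD)^\perp\setminus L$ (possible since $L=L^\perp$ is properly contained in that perp, and automatically $u_1\notin(v_1D)^\perp$), sets $W=u_1D+v_1D$, and then $v_2,\dots,v_n$ lie in $W^\perp$ by construction, so $v_2D+\cdots+v_nD$ is visibly the residual $D$-lagrangian by a $D$-dimension count. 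You instead take a generic hyperbolic partner $w$ of $v$ and recover the residual lagrangian as $L'=L\cap W^\perp=L\cap(wD)^\perp$, pinning its dimension by combining the half-dimension bound for totally isotropic subspaces with rank--nullity over $F$; all steps check out (in particular $L\subseteq(vD)^\perp$ and the nonsingularity of $q|_{W^\perp}$ are justified correctly). Your closing remark slightly oversells the difficulty, though: your own count can be streamlined without the half-dimension bound, precisely because the quadratic $D$-form hypothesis makes $(wD)^\perp$ a $D$-subspace. Indeed, $L'=L\cap(wD)^\perp$ is then a $D$-subspace of $L$ whose $F$-codimension in $L$ is at most $\dim_F D$ by your rank--nullity map, hence its $D$-codimension is at most one; and it is at least one since $\mathfrak{b}_q(v,w)\neq0$ gives $v\in L\setminus L'$, so $\dim_D L'=n-1$ directly. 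What your route buys is independence from any special choice of $w$ (any vector non-orthogonal to $v$ works); what the paper's route buys is an explicit residual lagrangian with no $F$-linear algebra at all.
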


\begin{proof}
Let $(V,q)$ be a $D$-metabolic quadratic $D$-space.
Let $L$ be a $D$-lagrangian of $(V,q)$ with a basis $\{v_1,\cdots,v_n\}$ over $D$.
Set $W=(v_2D+\cdots+v_nD)^\perp$.
Since $L=L^\perp\subsetneq W$, one can choose a vector $u_1\in W\setminus L$.
Hence, $u_1\notin (v_1D)^\perp$.
Set $W=u_1D+v_1D$.
By \cref{lemma}, $W$ is a two-dimensional $D$-subspace of $V$, $q|_W$ is $D$-metabolic and $q\simeq_Dq|_W\perp q|_{W^\perp}$.
 We also have $v_2,\cdots,v_n\in W^\perp$.
By dimension count, $v_2D+\cdots+v_nD$ is a $D$-lagrangian of $q|_{W^\perp}$.
The result now follows by induction on $n$.
\end{proof}

\begin{prop}\label{metan}
Let $U$, $V$ and $W$ be finite dimensional right vector spaces over $D$ and let $(V,q)\simeq(U,\rho)\perp (W,\phi)$ be a $D$-isometry of nonsingular quadratic spaces.
Suppose further that $\phi$ is a quadratic $D$-form.
If $q$ and $\phi$ are $D$-metabolic then $\rho$ is also $D$-metabolic.
\end{prop}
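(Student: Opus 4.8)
The plan is to build a $D$-lagrangian of $\rho$ by hand, transporting a lagrangian of $q$ across the decomposition with the aid of a lagrangian of $\phi$. First I would use the $D$-isometry to identify $(V,q)$ with $(U,\rho)\perp(W,\phi)$, so that $U$ and $W$ become orthogonal $D$-subspaces of $V$ with $q|_U=\rho$ and $q|_W=\phi$; both restrictions are nonsingular because $q$ is. Since $q$ is $D$-metabolic, fix a $D$-lagrangian $L$ of $q$, so that $\dim_D L=\tfrac12\dim_D V$ and $L^{\perp_q}=L$; since $\phi$ is $D$-metabolic, fix a $D$-lagrangian $M\subseteq W$ of $\phi$, so that $\dim_D M=\tfrac12\dim_D W$ and $M^{\perp_\phi}=M$. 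The candidate $D$-lagrangian of $\rho$ will be $L_\rho:=\pi_U\big(L\cap(U+M)\big)\subseteq U$, where $\pi_U\colon U+M\to U$ is the projection along $M$ (well defined since $U\cap M\subseteq U\cap W=\{0\}$).

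Two structural facts drive the argument. First, a direct orthogonality check using $U\perp W$, the nonsingularity of $\rho$, and $M^{\perp_\phi}=M$ gives $(U+M)^{\perp_q}=M$; thus $M$ is the radical of $q|_{U+M}$, every element of $M$ is $q$-orthogonal to all of $U+M$, and $\pi_U$ induces an isometry $(U+M)/M\simeq(U,\rho)$. Using this, $L_\rho$ is totally isotropic for $\rho$: writing $\ell_i=\pi_U(\ell_i)+m_i$ with $m_i\in M$ for $\ell_1,\ell_2\in L\cap(U+M)$, one checks $\rho(\pi_U\ell_i)=q(\ell_i)=0$ and $\mathfrak{b}_\rho(\pi_U\ell_1,\pi_U\ell_2)=\mathfrak{b}_q(\ell_1,\ell_2)=0$, the cross terms vanishing because $m_i\in(U+M)^{\perp_q}$ and $L$ is totally isotropic. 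Since $\ker\big(\pi_U|_{L\cap(U+M)}\big)=L\cap M$, we obtain $\dim_D L_\rho=\dim_D\big(L\cap(U+M)\big)-\dim_D(L\cap M)$.

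The main obstacle is to show this dimension is exactly $\tfrac12\dim_D U$: the naive bound coming from $L\cap U$ alone is too weak, so I would instead exploit the nonsingularity of $q$ through the identity $\big(L+(U+M)\big)^{\perp_q}=L^{\perp_q}\cap(U+M)^{\perp_q}=L\cap M$. Because $L\cap M$ is a $D$-subspace, the $F$-dimension formula $\dim_F X+\dim_F X^{\perp_q}=\dim_F V$ for the nondegenerate form $q$ may be divided by $\dim_F D$ to yield $\dim_D\big(L+(U+M)\big)=\dim_D V-\dim_D(L\cap M)$. Substituting this into the modular law $\dim_D\big(L\cap(U+M)\big)=\dim_D L+\dim_D(U+M)-\dim_D\big(L+(U+M)\big)$ gives $\dim_D L_\rho=\dim_D L+\dim_D(U+M)-\dim_D V$, and with $\dim_D L=\tfrac12\dim_D V$, $\dim_D(U+M)=\dim_D U+\tfrac12\dim_D W$ and $\dim_D V=\dim_D U+\dim_D W$ the right-hand side collapses to $\tfrac12\dim_D U$. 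Hence $L_\rho$ is a totally isotropic $D$-subspace of $U$ of dimension $\tfrac12\dim_D U$; as $\rho$ is nonsingular, $L_\rho$ is a $D$-lagrangian, so $\rho$ is $D$-metabolic. It is worth noting that this argument invokes $\phi$ only through the existence of its $D$-lagrangian $M$, and never requires $q$ itself to be a quadratic $D$-form: the only orthogonal complements it uses, $(U+M)^{\perp_q}$ and $\big(L+(U+M)\big)^{\perp_q}$, are computed explicitly to be the $D$-subspaces $M$ and $L\cap M$.
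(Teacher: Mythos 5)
Your proof is correct, but it takes a genuinely different route from the paper's. The paper models its argument on \cite[(1.26)]{elman}: it first uses \cref{2} to break the $D$-metabolic quadratic $D$-form $\phi$ into an orthogonal sum of two-dimensional $D$-metabolic pieces --- this is exactly where the hypothesis that $\phi$ is a quadratic $D$-form gets consumed --- and then, in the case $\dim_DW=2$, analyzes the projection $L\to W$ of a $D$-lagrangian $L$ of $q$: if it is not surjective, $L_0=L\cap U$ is already a $D$-lagrangian of $\rho$ by dimension count; if it is surjective, a $D$-isotropic vector $w\in W$ is lifted to $v=u+w\in L$, and $L_0\oplus uD$ is shown to be a $D$-lagrangian of $(U,\rho)$. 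You instead carry out the classical one-step sublagrangian construction: the computation $(U+M)^{\perp_q}=M$, the transport $L_\rho=\pi_U\bigl(L\cap(U+M)\bigr)$, and the exact dimension count via $\bigl(L+(U+M)\bigr)^{\perp_q}=L\cap M$ together with the rank formula for the nondegenerate polar form all check out (dividing $F$-dimensions by $\dim_FD$ is legitimate since every space in sight, including $L\cap M$, is a $D$-subspace, and you correctly verify that $\rho$ itself, not merely $\mathfrak{b}_\rho$, vanishes on $L_\rho$). Your closing observation is accurate and is the main dividend of your approach: you use only the existence of $D$-lagrangians of $q$ and of $\phi$, never that $\phi$ (or $q$) is a quadratic $D$-form, so your argument proves a slightly stronger statement than \cref{metan} as the paper proves it, and it does so without the induction hidden in \cref{2}. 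What the paper's route buys in exchange is economy within its own framework --- it reuses \cref{lemma} and \cref{2} and stays visibly parallel to the standard quadratic-form cancellation argument --- whereas yours is self-contained, induction-free, and hypothesis-lighter.
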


\begin{proof}
The proof is very similar to that of \cite[(1.26)]{elman}.
Since $\phi$ is a $D$-metabolic quadratic $D$-form, it is $D$-isometric to orthogonal sums of two-dimensional $D$-metabolic quadratic $D$-forms by \cref{2}.
Hence, it suffices to consider the case where $\dim_DW=2$.
We identify $U$ and $W$ with subspaces of $V$, so that $V=U+W$ and $U\cap W=\{0\}$.
Let $L$ be a $D$-lagrangian of $(V,q)$.
Let $\pi:L\rightarrow W$ be the projection map and set $L_0=\ker\pi=L\cap U$.
If $\pi$ is not surjective, then $\dim_DL_0\geqslant\dim_DL-1$.
Hence, $L_0$ is a $D$-lagrangian of $(U,\rho)$ and the result follows.

Suppose that $\pi$ is surjective, so $\dim_DL_0=\dim_DL-2$.
Choose a $D$-isotropic vector $w\in W$.
As $\pi$ is surjective, there exists $v\in L$ such that $\pi(v)=w$, hence $v=u+w$ for some $u\in U$.
It follows that
\[\rho(ud)=q(ud+wd)-\phi(wd)=q(vd)-\phi(wd)=0\quad {\rm for} \ {\rm all}\ d\in D,\]
 i.e., $u$ is a $D$-isotropic vector of $(U,\rho)$.
Since $L_0\subseteq U$ and $w\in W$, we have $wd\in L_0^{\perp_q}$ for every $d\in D$.
Hence, $ud\in L_0^{\perp_q}$ for every $d\in D$, because $vd=ud+wd\in L\subseteq L_0^{\perp_q}$ for all $d\in D$.
Note that $ud\in U$ and $L_0\subseteq U$, hence $ud\in L_0^{\perp_q}\cap U=L_0^{\perp_\rho}$.

We claim that $w\notin L$.
Since $\phi$ is nonsingular, there exists $w'\in W$ such that
\begin{equation}\label{eq5}
\mathfrak{b}_\phi(w,w')\neq0.
\end{equation}
As $\pi$ is surjective, there exists $v'\in L$ such that $\pi(v')=w'$, i.e., $v'=u'+w'$ for some $u'\in U$.
Now, if $w\in L$ then
\[0=\mathfrak{b}_q(w,v')=\mathfrak{b}_q(w,u'+w')=\mathfrak{b}_q(w,w')=\mathfrak{b}_\phi(w,w'),\]
contradicting (\ref{eq5}).
Hence, $w\notin L$, as claimed.
Since $u+w=v\in L$, we have $u\notin L$, or equivalently, $u\notin L_0$.
By dimension count, $L_0\oplus uD$ is a $D$-lagrangian of $U$, proving the result.
\end{proof}

\begin{lemma}\label{qq}
Let $V$ and $V'$ be right vector spaces over $D$ and let $q:V\rightarrow F$ and $q':V'\rightarrow F$ be nonsingular quadratic forms.
If $q\simeq_Dq'$  then $q\perp (-q')$ is $D$-metabolic.
The converse is also true if $q$ and $q'$ are $D$-anisotropic.
\end{lemma}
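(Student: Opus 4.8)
The plan is to mimic the classical proof that two anisotropic forms are isometric if and only if their difference is metabolic (as in \cite[(1.26)]{elman} and its surroundings), transporting each step to the $D$-setting. The whole argument rests on identifying the graph of a $D$-isometry with a $D$-lagrangian, and conversely manufacturing a $D$-isometry out of the projections of a $D$-lagrangian.

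For the forward implication, I would suppose $f:V\rightarrow V'$ is a $D$-isometry, so that $f$ is an isomorphism of right $D$-vector spaces with $q'(f(v))=q(v)$ for all $v\in V$. First I would note that $q\perp(-q')$ is nonsingular, since the polar form of $-q'$ is $-\mathfrak{b}_{q'}$, which is nondegenerate, and an orthogonal sum of nonsingular forms is nonsingular. Then I would take the graph
\[L=\{(v,f(v))\mid v\in V\}\subseteq V\perp V'\]
as the candidate $D$-lagrangian. Because $f$ is $D$-linear, $L$ is a $D$-subspace and $(v,f(v))\mapsto v$ identifies it with $V$, so $\dim_DL=\dim_DV=\frac{1}{2}\dim_D(V\perp V')$. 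Finally $(q\perp(-q'))(v,f(v))=q(v)-q'(f(v))=0$ shows that $L$ is totally isotropic, hence a $D$-lagrangian, and $q\perp(-q')$ is $D$-metabolic.

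For the converse I would assume that $q$ and $q'$ are $D$-anisotropic and fix a $D$-lagrangian $L$ of $q\perp(-q')$. Let $\pi:L\rightarrow V$ and $\pi':L\rightarrow V'$ be the restrictions to $L$ of the two projections; since $L$ is a $D$-subspace, both are $D$-linear. The crucial step is to show that $\pi$ and $\pi'$ are injective, and this is exactly where $D$-anisotropy (rather than ordinary anisotropy) enters: if $(0,v')\in\ker\pi$, then $(0,v')D\subseteq L$ is totally isotropic, so $q'|_{v'D}=0$; were $v'\neq0$ this would make $v'$ a $D$-isotropic vector of $q'$, contradicting $D$-anisotropy. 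Hence $\pi$ is injective, and symmetrically $\pi'$ is injective using the $D$-anisotropy of $q$. From injectivity I would get $\dim_DL\leq\dim_DV$ and $\dim_DL\leq\dim_DV'$; combined with $\dim_DL=\frac{1}{2}(\dim_DV+\dim_DV')$ this forces $\dim_DV=\dim_DV'=\dim_DL$, so $\pi$ and $\pi'$ are $D$-isomorphisms.

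It then remains to set $f=\pi'\circ\pi^{-1}:V\rightarrow V'$, a composition of $D$-isomorphisms, and to verify the isometry condition: for $v\in V$ the element $\pi^{-1}(v)=(v,f(v))\in L$ is isotropic, whence $0=q(v)-q'(f(v))$, i.e. $q'(f(v))=q(v)$, so $f$ is a $D$-isometry and $q\simeq_Dq'$. The only genuinely delicate point, and the step I expect to be the main obstacle, is the injectivity argument: one must remember that $D$-anisotropy forbids $q'|_{v'D}=0$ rather than merely $q'(v')=0$, and that the validity of this deduction relies essentially on $L$ being a $D$-subspace, so that $(0,v')D$ lies inside $L$ and its $D$-isotropy can be detected.
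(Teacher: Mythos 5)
Your proposal is correct and follows essentially the same route as the paper's proof: the forward direction takes the graph $\{(v,f(v))\mid v\in V\}$ (equivalently, the span of $v_i+f(v_i)$ for a $D$-basis) as a $D$-lagrangian, and the converse uses the two projections $\pi,\pi'$ of a $D$-lagrangian $L$, with $D$-anisotropy of each form forcing injectivity ($L\cap V=L\cap V'=\{0\}$), the dimension count $\dim_DL=\frac{1}{2}(\dim_DV+\dim_DV')$ upgrading them to isomorphisms, and $f=\pi'\circ\pi^{-1}$ giving the $D$-isometry. Your added details --- nonsingularity of $q\perp(-q')$ and the observation that $(0,v')D\subseteq L$ is what makes $v'$ a $D$-isotropic (not merely isotropic) vector --- simply spell out steps the paper treats as routine.
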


\begin{proof}
We identify $V$ and $V'$ with $D$-subspaces of $V\oplus V'$, so that $V\oplus V'=V+V'$ and $V\cap V'=\{0\}$.
Suppose first that $q\simeq_Dq'$.
Let $f:(V,q)\rightarrow (V',q')$ be a $D$-isometry and let $\{v_1,\cdots,v_n\}$ be a basis of $V$ over $D$.
Then the $D$-subspace of $V\oplus V'$ spanned by $v_1+f(v_1),\cdots,v_n+f(v_n)$ is a $D$-lagrangian of $q\perp (-q')$.
Hence, $q\perp (-q')$ is $D$-metabolic.

Conversely, suppose that $q\perp (-q')$ is $D$-metabolic, and the forms $q$ and $q'$ are $D$-anisotropic.
Let $L$ be a $D$-lagrangian of $(V\perp V',q\perp(-q'))$.
Since $q$ is $D$-anisotropic, the intersection $L\cap V$ is trivial.
Hence, the projection $\pi':L\rightarrow V'$ is injective, which implies that $\dim_DL\leqslant \dim_DV'$.
Similarly, we have $\dim_DL\leqslant \dim_DV$.
The equality $\dim_DL=\frac{1}{2}(\dim_DV+\dim_DV')$ yields
\[{\dim}_DV={\dim}_DV'={\dim}_DL.\]
Hence, the projections $\pi:L\rightarrow V$ and $\pi':L\rightarrow V'$ are isomorphisms of right vector spaces over $D$.
It is now readily verified that the map $\pi'\circ\pi^{-1}:V\rightarrow V'$ is a $D$-isometry $(V,q)\simeq_D(V',q')$.
\end{proof}

We are now ready to state a Witt decomposition theorem for quadratic $D$-forms.
\begin{theorem}\label{main}
Let $(V,q)$  be a nonsingular quadratic $D$-space.
Then $q\simeq_Dq_{\met}\perp q_{\an}$, where $q_{\met}$ is $D$-metabolic and $q_{\an}$ is $D$-anisotropic.
Moreover, $q_{\an}$ is uniquely determined, up to $D$-isometry.
\end{theorem}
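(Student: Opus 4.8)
The plan is to establish existence by induction on $\dim_D V$, and to deduce uniqueness by reducing, through \cref{qq}, to the $D$-metabolicity of $q_{\an}\perp(-q'_{\an})$.

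For existence I would induct on $\dim_D V$. If $q$ is $D$-anisotropic I take $q_{\an}=q$ and let $q_{\met}$ be the (vacuously $D$-metabolic) form on the zero space. Otherwise $q$ is $D$-isotropic, and \cref{dis} supplies a two-dimensional $D$-subspace $W$ for which $q|_W$ is $D$-metabolic and $q\simeq_D q|_W\perp q|_{W^\perp}$. Since $q|_W$ and $q$ are nonsingular, $W^\perp$ is a nonsingular $D$-subspace and $q|_{W^\perp}$ is a nonsingular quadratic $D$-form of strictly smaller $D$-dimension, to which I apply the induction hypothesis: $q|_{W^\perp}\simeq_D\tilde q_{\met}\perp\tilde q_{\an}$. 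Regrouping gives $q\simeq_D(q|_W\perp\tilde q_{\met})\perp\tilde q_{\an}$, and $q|_W\perp\tilde q_{\met}$ is $D$-metabolic because the internal direct sum of the two $D$-lagrangians is a totally isotropic $D$-subspace of half the dimension. This settles existence; note that $q_{\an}$ is automatically nonsingular, since $q$ is nonsingular and the radical of an orthogonal sum is the sum of the radicals of the summands.

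For uniqueness, suppose $q\simeq_D q_{\met}\perp q_{\an}\simeq_D q'_{\met}\perp q'_{\an}$ as in the statement. Each of the four forms is $D$-isometric to a restriction of $q$ to a $D$-subspace, hence is itself a quadratic $D$-form, and $q_{\an},q'_{\an}$ are nonsingular by the remark just made. Applying the forward direction of \cref{qq} to $q_{\met}\perp q_{\an}\simeq_D q'_{\met}\perp q'_{\an}$, the form
\[(q_{\met}\perp q_{\an})\perp\bigl(-(q'_{\met}\perp q'_{\an})\bigr)\]
is $D$-metabolic; after permuting summands it is $D$-isometric to $\bigl(q_{\an}\perp(-q'_{\an})\bigr)\perp\bigl(q_{\met}\perp(-q'_{\met})\bigr)$, in which $-q'_{\met}$ is still $D$-metabolic. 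If I can strip off the $D$-metabolic summand $q_{\met}\perp(-q'_{\met})$ and conclude that $q_{\an}\perp(-q'_{\an})$ is $D$-metabolic, then the converse direction of \cref{qq}, valid because $q_{\an}$ and $q'_{\an}$ are $D$-anisotropic, gives $q_{\an}\simeq_D q'_{\an}$, as desired.

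The delicate point — and the step I expect to be the main obstacle — is that by \cref{rems} the sum $q_{\met}\perp(-q'_{\met})$ need not be a quadratic $D$-form, so I cannot use it directly as the form $\phi$ in \cref{metan}. To get around this I would first use \cref{2} to write $q_{\met}$ and $-q'_{\met}$ as orthogonal sums of two-dimensional $D$-metabolic quadratic $D$-forms $\mu_1,\dots,\mu_m$, each of which is a genuine quadratic $D$-form. The big $D$-metabolic form is then $D$-isometric to $\bigl(q_{\an}\perp(-q'_{\an})\bigr)\perp\mu_1\perp\cdots\perp\mu_m$, and I remove the $\mu_i$ one at a time: an induction on $m$, invoking \cref{metan} with $\phi=\mu_m$ at each stage, shows that $q_{\an}\perp(-q'_{\an})$ is $D$-metabolic, completing the argument.
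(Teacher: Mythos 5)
Your proof is correct. The existence half coincides with the paper's argument (induction on $\dim_DV$ via \cref{dis}), but your uniqueness half takes a genuinely different route around the key obstruction, which you diagnose exactly as the paper's \cref{rems} warns: orthogonal sums of quadratic $D$-forms need not be quadratic $D$-forms, so $q_{\met}\perp(-q'_{\met})$ cannot be fed to \cref{metan} as the form $\phi$. The paper sidesteps this by never forming that sum at all: it appends $-q'_{\an}$ to \emph{both} decompositions, so that $q_{\met}\perp q_{\an}\perp(-q'_{\an})\simeq_D q'_{\met}\perp q'_{\an}\perp(-q'_{\an})$ is $D$-metabolic by \cref{qq}, and then applies \cref{metan} exactly once with $\phi=q_{\met}$ -- which \emph{is} a quadratic $D$-form, being a subform of $q$ -- to conclude directly that $q_{\an}\perp(-q'_{\an})$ is $D$-metabolic. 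You instead double the whole form via the forward direction of \cref{qq}, and then strip $q_{\met}\perp(-q'_{\met})$ piecewise: decompose each of $q_{\met}$ and $-q'_{\met}$ into two-dimensional $D$-metabolic quadratic $D$-forms $\mu_1,\dots,\mu_m$ by \cref{2} and remove them one at a time by $m$ applications of \cref{metan}. This works because \cref{metan} only requires $\phi$ (not the remainder $\rho$) to be a quadratic $D$-form, $D$-metabolicity is defined for arbitrary quadratic forms on right $D$-spaces so the intermediate remainders are legitimate, and each remainder is nonsingular as an orthogonal sum of nonsingular forms; the needed facts that $-q'_{\met}$ is still a $D$-metabolic quadratic $D$-form and that each stage's conclusion feeds the next stage's hypothesis are all in place. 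The trade-off: the paper's asymmetric trick is shorter (a single cancellation in a smaller ambient space), while your symmetric doubling makes the Witt-cancellation mechanism explicit and in effect proves the slightly more general statement that any orthogonal sum of $D$-metabolic quadratic $D$-forms can be cancelled -- though note that your iterated stripping essentially re-runs the reduction to the two-dimensional case that is already built into the paper's proof of \cref{metan} itself.
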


\begin{proof}
The existence of such a decomposition follows from \cref{dis} and induction on $\dim_DV$.
To prove the uniqueness, suppose that
\[q\simeq_Dq_{\met}\perp q_{\an}\simeq_Dq'_{\met}\perp q'_{\an},\]
where $q_{\an}$ and $q'_{\an}$ are $D$-anisotropic, and $q_{\met}$ and $q'_{\met}$ are $D$-metabolic.
Then
\[q_{\met}\perp q_{\an}\perp(- q'_{\an})\simeq_Dq'_{\met}\perp q'_{\an}\perp(- q'_{\an}),\]
is $D$-metabolic by \cref{qq}.
Note that $q_{\met}$ is a quadratic $D$-form, as it is a subform of $q$.
Since $q_{\met}$ is $D$-metabolic, the form $q_{\an}\perp(- q'_{\an})$ is also $D$-metabolic by \cref{metan}.
Hence, \cref{qq} implies that $q_{\an}\simeq_Dq'_{\an}$.
\end{proof}

Since basic properties of quadratic forms naturally extend to quadratic $D$-forms, one may consider the following question as a generalization of Springer's theorem.

\begin{question}\label{ques}
Let $K/F$ be a finite field extension of odd degree and let $V$ be a right vector space over $F$.
If $q:V\rightarrow F$ is a $D$-anisotropic quadratic $D$-form, does it imply that $q_K:V_K\rightarrow K$ is $D_K$-anisotropic?
\end{question}

\section{The $\pi$-invariant of hermitian and skew hermitian forms}\label{sec-pi}
In this section, we fix $(D,\sigma)$ as a finite dimensional division algebra with involution of the first kind over a field $F$ and $\lambda=\pm1$.
Suppose that either $D\neq F$ or $\lambda\neq-1$, which implies that $\symd_{\lambda}(D,\sigma)\neq\{0\}$, thanks to \cite[(2.6)]{knus}.
We also fix $\pi:\sym_\lambda(D,\sigma)\rightarrow F$ as an $F$-linear map such that $\pi|_{\symd_\lambda(D,\sigma)}$ is nontrivial.

Let $(V,h)$ be a $\lambda$-hermitian space over $(D,\sigma)$.
Define a map
$q_{h,\pi}:V\rightarrow F$
via
\[q_{h,\pi}(v)=\pi(h(v,v))\quad {\rm for}\ v\in V.\]
We call $q_{h,\pi}$ the {\it $\pi$-invariant} of $(V,h)$.

\begin{lemma}\label{quad}
Let $(V,h)$ be a $\lambda$-hermitian space over $(D,\sigma)$.
Then the map $q_{h,\pi}:V\rightarrow F$ is a quadratic $D$-form with the polar form
\begin{align}\label{eq3}
\mathfrak{b}_{h,\pi}(u,v)=\pi(h(u,v)+h(v,u))\quad {\rm for}\ u,v\in V.
\end{align}
\end{lemma}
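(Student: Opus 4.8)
The plan is to prove the two assertions of the lemma in turn: that $q_{h,\pi}$ is a quadratic form with polar form \eqref{eq3}, and that it is in fact a quadratic $D$-form. For the first assertion I would argue by direct expansion. Since $\sigma$ is of the first kind, $F$ lies in the centre of $D$ and is fixed by $\sigma$; hence for $a,b\in F$ and $u,v\in V$ the biadditivity and hermitian axioms give
\[
h(ua+vb,\,ua+vb)=a^2h(u,u)+b^2h(v,v)+ab\bigl(h(u,v)+h(v,u)\bigr).
\]
Each summand lies in $\sym_\lambda(D,\sigma)$ --- the diagonal terms because $h(w,w)\in\sym_\lambda(D,\sigma)$, and the cross term because $h(u,v)+h(v,u)=h(u,v)+\lambda\sigma(h(u,v))\in\symd_\lambda(D,\sigma)$ --- so applying the $F$-linear map $\pi$ is legitimate and yields $q_{h,\pi}(ua+vb)=a^2q_{h,\pi}(u)+b^2q_{h,\pi}(v)+ab\,\mathfrak{b}_{h,\pi}(u,v)$ with $\mathfrak{b}_{h,\pi}$ as in \eqref{eq3}. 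A one-line check (biadditivity of $h$ together with centrality of $F$) shows $\mathfrak{b}_{h,\pi}$ is $F$-bilinear, finishing this part.

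For the second assertion the key reduction I propose is to show that, for every $D$-subspace $W$ of $V$, the polar-orthogonal $W^\perp$ coincides with the $h$-orthogonal complement $\{v\in V\mid h(v,w)=0\ \text{for all}\ w\in W\}$. The latter is visibly a $D$-subspace, since $h(vd,w)=\sigma(d)h(v,w)$, so once the equality is established we are done. The inclusion $\supseteq$ is immediate: if $h(v,w)=0$ then $h(w,v)=\lambda\sigma(h(v,w))=0$, whence $\mathfrak{b}_{h,\pi}(v,w)=\pi(0)=0$.

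The crux is the reverse inclusion $\subseteq$. Fix $v\in W^\perp$ and $w\in W$, and set $x=h(v,w)$. Because $W$ is a $D$-subspace, every $wd'$ with $d'\in D$ lies in $W$, so
\[
0=\mathfrak{b}_{h,\pi}(v,wd')=\pi\bigl(h(v,w)d'+\sigma(d')h(w,v)\bigr)=\pi\bigl(xd'+\lambda\sigma(xd')\bigr),
\]
the last equality using $h(w,v)=\lambda\sigma(x)$ and $\sigma(d')\sigma(x)=\sigma(xd')$. As $d'$ ranges over $D$ the element $xd'+\lambda\sigma(xd')$ runs through a family in $\symd_\lambda(D,\sigma)$, and here is the decisive observation: if $x\neq0$ then, $x$ being invertible in the division ring $D$, the products $xd'$ exhaust all of $D$, so $\pi$ would vanish on the whole of $\symd_\lambda(D,\sigma)$, contradicting the standing hypothesis that $\pi|_{\symd_\lambda(D,\sigma)}$ is nontrivial. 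Hence $x=h(v,w)=0$, placing $v$ in the $h$-orthogonal complement. I expect this final step --- extracting $h(v,w)=0$ from the vanishing of $\pi$ on the single coset $xD$ --- to be the one genuinely nontrivial idea, the invertibility of $x$ and the nontriviality of $\pi$ on $\symd_\lambda(D,\sigma)$ being exactly what converts a statement about the scalar-valued form $q_{h,\pi}$ back into one about $h$ itself; everything else is formal.
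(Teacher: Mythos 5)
Your proposal is correct, but on the main point it takes a genuinely different route from the paper. For the first assertion the paper simply cites \cite[(3.1)]{meh}; your direct expansion (including the check that each summand of $h(ua+vb,ua+vb)$ lies in $\sym_\lambda(D,\sigma)$, so that $\pi$ may be applied termwise) is a fine substitute. For the $D$-form property, however, the paper never identifies $W^\perp$ with the hermitian orthogonal: given $w\in W^\perp$, $d\in D$ and $v\in W$, it disposes of the case $h(w,v)=0$ directly and otherwise performs the substitution $d'=h(w,v)^{-1}h(v,w)d$, for which a short computation gives
\[
\mathfrak{b}_{h,\pi}(wd,v)=\pi\bigl(\sigma(d')h(v,w)+h(w,v)d'\bigr)=\mathfrak{b}_{h,\pi}(w,vd')=0,
\]
since $vd'\in W$. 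That argument uses only the $F$-linearity of $\pi$, so the lemma as proved in the paper holds for \emph{arbitrary} $\pi$, consistently with \cref{remp}, where a $\pi$ vanishing on $\symd_\lambda(D,\sigma)$ makes $q_{h,\pi}$ totally singular --- which is still a quadratic $D$-form. You instead prove the stronger identification
\[
W^{\perp_{q_{h,\pi}}}=\{v\in V\mid h(v,w)=0 \ \text{for all}\ w\in W\},
\]
and your proof of it is sound: the identity $\mathfrak{b}_{h,\pi}(v,wd')=\pi\bigl(xd'+\lambda\sigma(xd')\bigr)$ with $x=h(v,w)$, together with the fact that $d'\mapsto xd'$ is onto $D$ when $x\neq0$, correctly forces $\pi|_{\symd_\lambda(D,\sigma)}=0$ unless $x=0$. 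This does invoke the standing hypothesis that $\pi|_{\symd_\lambda(D,\sigma)}$ is nontrivial --- legitimately, since that is fixed throughout \cref{sec-pi} --- but note that your equality genuinely fails without it (then $W^\perp=V$ while the hermitian orthogonal can be proper), so your route is less general than the paper's, while buying more in the intended setting: it shows that orthogonality for the $\pi$-invariant detects hermitian orthogonality, and the coset trick you single out as the decisive observation is precisely the device the paper deploys later, in the proof of \cref{iso}.
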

\begin{proof}
That $q_{h,\pi}$ is a quadratic form and $\mathfrak{b}_{h,\pi}$ is its polar form are easily verified (see \cite[(3.1)]{meh}).
We claim that $W^\perp$ is a vector space over $D$ for every $D$-subspace $W$ of $V$, i.e., $q$ is a quadratic $D$-form.
Let $W$ be a $D$-subspace of $V$ and let $w\in W^\perp$.
We should prove that $wd\in W^\perp$ for every $d\in D$.
Let $d\in D$ and $v\in W$.
If $h(w,v)=0$ then
\[\mathfrak{b}_{h,\pi}(wd,v)=\pi(h(wd,v)+h(v,wd))=0.\]
Otherwise, let $d'=h(w,v)^{-1}h(v,w)d\in D$.
Then $d=h(v,w)^{-1}h(w,v)d'$ and
\begin{align*}
\mathfrak{b}_{h,\pi}(wd,v)&=\pi(h(wd,v)+h(v,wd))\\
&=\pi(\sigma(d)h(w,v)+h(v,w)d)\\
&=\pi(\sigma(d')h(v,w)+h(w,v)d')\\
&=\pi(h(vd',w)+h(w,vd'))\\
&=\mathfrak{b}_{h,\pi}(w,vd')=0.
\end{align*}
Hence, $\mathfrak{b}_{h,\pi}(wd,v)=0$ for all $v\in W$ and $d\in D$.
It follows that $wd\in W^\perp$ for every $d\in D$, proving the claim.
\end{proof}

\begin{lemma}\label{reg}
Let $(V,h)$ be a $\lambda$-hermitian space over $(D,\sigma)$.
Then $h$ is nondegenerate if and only if $q_{h,\pi}$ is nonsingular.
\end{lemma}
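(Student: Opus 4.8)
The plan is to prove the two implications separately, with the forward direction being a one-line unwinding of the definitions and the converse carrying all of the content. First I would handle the easy implication: if $h$ is degenerate, choose $u\neq0$ with $h(u,v)=0$ for all $v\in V$. Then the hermitian symmetry gives $h(v,u)=\lambda\sigma(h(u,v))=0$ as well, so $\mathfrak{b}_{h,\pi}(u,v)=\pi(h(u,v)+h(v,u))=0$ for every $v$. Thus $u\in\rad V$ and $q_{h,\pi}$ is singular. This needs no hypothesis on $\pi$.

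For the converse I would argue directly rather than by contraposition. Assume $q_{h,\pi}$ is singular and fix a nonzero $u\in\rad V$. The reason to invoke \cref{quad} here is that it tells us $q_{h,\pi}$ is a quadratic $D$-form, so $\rad V$ is a $D$-subspace; hence $ud\in\rad V$ for every $d\in D$, i.e. $\mathfrak{b}_{h,\pi}(ud,v)=0$ for all $d\in D$ and $v\in V$. Expanding with $h(ud,v)=\sigma(d)h(u,v)$, $h(v,ud)=h(v,u)d$ and $h(v,u)=\lambda\sigma(h(u,v))$, this becomes
\[
\pi\bigl(\sigma(d)h(u,v)+\lambda\sigma(h(u,v))d\bigr)=0\qquad\text{for all }d\in D,\ v\in V.
\]
The key observation is that, setting $y=\sigma(d)h(u,v)$, one has $\sigma(y)=\sigma(h(u,v))\,d$, so the argument of $\pi$ equals $y+\lambda\sigma(y)\in\symd_\lambda(D,\sigma)$. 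Now fix $v$ and write $a=h(u,v)$. If $a\neq0$, then since $D$ is a division algebra and $\sigma$ is bijective, $\{\sigma(d)a:d\in D\}=Da=D$, so as $d$ varies $y$ runs through all of $D$. The displayed vanishing then forces $\pi$ to be trivial on $\symd_\lambda(D,\sigma)=\{y+\lambda\sigma(y):y\in D\}$, contradicting our standing assumption on $\pi$. Hence $h(u,v)=0$ for every $v$, and the nonzero $u$ witnesses the degeneracy of $h$.

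The crux of the argument — and the step I expect to be the only real obstacle — is precisely this last manipulation: using the quadratic $D$-form structure to upgrade the single relation $u\in\rad V$ to the whole line $uD\subseteq\rad V$, recognizing the resulting family of elements as lying in $\symd_\lambda(D,\sigma)$, and then exploiting the invertibility of $a$ in the division algebra $D$ to sweep out all of $\symd_\lambda(D,\sigma)$ and trigger the contradiction with the nontriviality of $\pi|_{\symd_\lambda(D,\sigma)}$. Everything else is formal. It is worth noting that the nontriviality hypothesis on $\pi$ is used only in this direction, which is consistent with the easy direction requiring nothing of $\pi$.
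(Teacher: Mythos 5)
Your proof is correct, and it takes a genuinely different route from the paper's. For the substantive implication the paper argues in the direct direction: assuming $h$ nondegenerate, it fixes $x\in\symd_\lambda(D,\sigma)$ with $\pi(x)\neq0$ and invokes an external result (\cite[(3.5)]{meh}) asserting that for each nonzero $v$ there exists $w$ with $h(v,w)+h(w,v)=x$, whence $\mathfrak{b}_{h,\pi}(v,w)=\pi(x)\neq0$. You instead prove the contrapositive (singular $q_{h,\pi}$ implies degenerate $h$) --- despite your remark that you are arguing ``directly,'' this is contraposition --- and your argument is self-contained: you use \cref{quad} to upgrade $u\in\rad V$ to $uD\subseteq\rad V$, then observe that $\sigma(d)h(u,v)+\lambda\sigma(h(u,v))d=y+\lambda\sigma(y)$ with $y=\sigma(d)h(u,v)$, and that $y$ sweeps all of $D$ when $h(u,v)\neq0$, forcing $\pi|_{\symd_\lambda(D,\sigma)}=0$, a contradiction. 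All the steps check out: $\sigma$ bijective plus invertibility of $h(u,v)$ in the division ring does give $\{\sigma(d)h(u,v):d\in D\}=D$, and $\symd_\lambda(D,\sigma)$ is by definition the image of $y\mapsto y+\lambda\sigma(y)$. What your approach buys is independence from the citation to \cite{meh}, essentially reproving its surjectivity statement inline; what the paper's buys is brevity. One simplification worth noting: your appeal to \cref{quad} is actually dispensable. From $u\in\rad V$ alone one already has $0=\mathfrak{b}_{h,\pi}(u,vd)=\pi\bigl(h(u,v)d+\lambda\sigma(h(u,v)d)\bigr)$ for all $d\in D$ and $v\in V$, and the same sweeping argument applies with $y=h(u,v)d$; varying the \emph{second} argument over the line $vD$ avoids needing that $\rad V$ is a $D$-subspace, making the proof of \cref{reg} independent of the $D$-form property established in \cref{quad}.
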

\begin{proof}
If there exists $u\in V$ such that $h(u,v)=0$ for all $v\in V$ then $\mathfrak{b}_{h,\pi}(u,v)=0$ for all $v\in V$.
Hence, $h$ is nondegenerate if $q_{h,\pi}$ is nonsingular.
Conversely, suppose that $h$ is nondegenerate.
Choose $x\in\symd_\lambda(D,\sigma)$ such that $\pi(x)\neq0$.
Let $v\in V$ be an arbitrary nonzero vector.
By \cite[(3.5)]{meh} there exists $w\in V$ such that $h(v,w)+h(w,v)=x$.
It follows that
\[\mathfrak{b}_{h,\pi}(v,w)=\pi(h(v,w)+h(w,v))=\pi(x)\neq0,\] i.e., $q_{h,\pi}$ is nonsingular.
\end{proof}

\begin{remark}\label{remp}
Let $(V,h)$ be a $\lambda$-hermitian space over $(D,\sigma)$.
It is worth noting that if $\pi|_{\symd_\lambda(D,\sigma)}$ is trivial then $q_{h,\pi}$ is totally singular.
Indeed, if $\car F\neq2$ then $\pi$ is trivial, because $\sym_\lambda(D,\sigma)=\symd_\lambda(D,\sigma)$.
Hence, $q_{h,\pi}$ is the zero form and does not give any information about $h$.
Otherwise, since $h(u,v)+h(v,u)\in\symd_\lambda(D,\sigma)$ for every $u,v\in V$, assuming $\pi|_{\symd_\lambda(D,\sigma)}=0$, one concludes that
\[\mathfrak{b}_{h,\pi}(u,v)=\pi(h(u,v)+h(v,u))=0\quad {\rm for \ all} \ u,v\in V.\]
Hence, $q_{h,\pi}$ is totally singular.
\end{remark}
Note that \cref{remp} also applies in the exceptional case where $D=F$ and $\lambda=-1$.
In this case, one has $\symd_\lambda(D,\sigma)=\{0\}$.
Hence,  $\pi|_{\symd_\lambda(D,\sigma)}$ is trivial, which implies that the form $q_{h,\pi}$ is totally singular.

\begin{lemma}\label{ker}
Let $(A,\tau)$ be a central simple algebra with involution of the first kind over $F$ and let $S$ be a subspace of $A$.
If there exists a unit $x\in \sym_\lambda(A,\tau)$ such that $\tau(y)xy\in S$ for every $y\in A$, then $\symd_\lambda(A,\tau)\subseteq S$.
\end{lemma}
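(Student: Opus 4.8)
The plan is to extract from the hypothesis the ``bilinearised'' elements $\tau(y)x+xy$, and then to use that $x$ is a unit to recognise that these already exhaust $\symd_\lambda(A,\tau)$.

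First I would record the case $y=1$ of the hypothesis: since $\tau(1)=1$, it yields $x\in S$. Next, for an arbitrary $y\in A$, I substitute $y+1$ in place of $y$ and expand, using that $\tau$ is additive with $\tau(1)=1$:
\[\tau(y+1)\,x\,(y+1)=\tau(y)xy+\tau(y)x+xy+x.\]
The three elements $\tau(y+1)x(y+1)$, $\tau(y)xy$ and $x$ all lie in $S$, and $S$ is an $F$-subspace, so subtracting them gives $\tau(y)x+xy\in S$ for every $y\in A$. Using the substitution $y\mapsto y+1$ rather than a polarisation with $\tfrac12$ keeps the argument valid in every characteristic, including $\car F=2$.

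The key computation is that $\tau(y)x+xy$ is precisely a generator of $\symd_\lambda(A,\tau)$. Setting $a=xy$ and using $\tau(x)=\lambda x$, I get $\tau(a)=\tau(y)\tau(x)=\lambda\,\tau(y)x$, whence $\lambda\tau(a)=\lambda^2\,\tau(y)x=\tau(y)x$ because $\lambda^2=1$. Therefore $\tau(y)x+xy=a+\lambda\tau(a)\in\symd_\lambda(A,\tau)$, and this element lies in $S$ by the previous step.

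Finally, since $x$ is a unit, left multiplication by $x$ is a bijection of $A$, so as $y$ runs over $A$ the element $a=xy$ runs over all of $A$. As every element of $\symd_\lambda(A,\tau)$ has the form $a+\lambda\tau(a)$ for some $a\in A$, the previous paragraph shows that all of them lie in $S$, giving $\symd_\lambda(A,\tau)\subseteq S$. The only place where the hypotheses are genuinely used is the invertibility of $x$, which is exactly what makes $xy$ sweep out all of $A$; I expect no real obstacle beyond this observation, and in particular the central simplicity and first-kind assumptions are not needed for this elementary argument.
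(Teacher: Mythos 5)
Your proof is correct and is essentially the paper's own argument: the paper uses exactly the same polarisation identity, writing $z=z'+\lambda\tau(z')=xy+\tau(y)x=\tau(y+1)x(y+1)-\tau(y)xy-\tau(1)\cdot x\cdot 1$ with $y=x^{-1}z'$, which is just your computation run in the opposite direction (you show the elements $xy+\lambda\tau(xy)$ sweep out $\symd_\lambda(A,\tau)$ as $y$ varies, the paper picks the right $y$ for a given $z$). Your closing observation is also accurate: the paper's proof likewise never invokes centrality, simplicity, or the first-kind hypothesis, only that $\tau$ is an anti-homomorphism fixing $1$, that $x$ is a unit in $\sym_\lambda(A,\tau)$, and that $S$ is closed under addition.
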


\begin{proof}
Let $z\in\symd_\lambda(A,\tau)$ be an arbitrary element.
We prove that $z\in S$.
Write $z=z'+\lambda\tau(z')$ for some $z'\in A$ and set $y=x^{-1}z'\in A$.
Then
\begin{align*}
z&=z'+\lambda\tau(z')=xy+\lambda\tau(xy)=xy+\tau(y)x\\
&=\tau(y+1)x(y+1)-\tau(y)xy-\tau(1)\cdot x\cdot1\in S.\qedhere
\end{align*}
\end{proof}

We now show that the form $q_{h,\pi}$ can be used to classify hermitian and skew hermitian forms, up to isometry.

\begin{theorem}\label{isom}
Let $(V,h)$ and $(V',h')$ be two $\lambda$-hermitian spaces over $(D,\sigma)$.
Then $h\simeq h'$ if and only if $q_{h,\pi}\simeq_Dq_{h',\pi}$.
\end{theorem}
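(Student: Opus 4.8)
The plan is to prove the easy implication directly and to obtain the converse by showing that the datum of the polar form $\mathfrak{b}_{h,\pi}$ together with the right $D$-module structure of $V$ suffices to reconstruct $h$ itself. For the forward direction, if $f\colon V\to V'$ is an isometry of $\lambda$-hermitian spaces, then $f$ is in particular an isomorphism of right $D$-vector spaces satisfying $h'(f(v),f(v))=h(v,v)$, whence $q_{h',\pi}(f(v))=\pi(h'(f(v),f(v)))=\pi(h(v,v))=q_{h,\pi}(v)$ for all $v$; thus $f$ is a $D$-isometry and the implication $h\simeq h'\Rightarrow q_{h,\pi}\simeq_D q_{h',\pi}$ holds.

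For the converse, the key observation is a reconstruction formula. I would introduce the $F$-linear map $\widetilde\pi\colon D\to F$, $\widetilde\pi(a)=\pi(a+\lambda\sigma(a))$, which is well defined because $a+\lambda\sigma(a)\in\symd_\lambda(D,\sigma)\subseteq\sym_\lambda(D,\sigma)$. Using $h(u,vd)=h(u,v)d$, $h(vd,u)=\sigma(d)h(v,u)$ and $h(v,u)=\lambda\sigma(h(u,v))$, a short computation gives, for all $u,v\in V$ and $d\in D$,
\[\mathfrak{b}_{h,\pi}(u,vd)=\pi\bigl(h(u,v)d+\lambda\sigma(h(u,v)d)\bigr)=\widetilde\pi\bigl(h(u,v)d\bigr).\]
Thus the whole collection of scalars $\{\mathfrak{b}_{h,\pi}(u,vd)\}_{d\in D}$ is encoded by the single element $h(u,v)\in D$ through $\widetilde\pi$.

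The crucial point is that this encoding is faithful: I claim the $F$-linear map $a\mapsto(d\mapsto\widetilde\pi(ad))$ from $D$ to $\Hom_F(D,F)$ is injective. Since the image of $a\mapsto a+\lambda\sigma(a)$ is exactly $\symd_\lambda(D,\sigma)$ and $\pi|_{\symd_\lambda(D,\sigma)}$ is nontrivial by hypothesis, $\widetilde\pi$ is not the zero map. For a nonzero $a\in D$ one has $aD=D$ because $D$ is a division algebra, so $d\mapsto\widetilde\pi(ad)$ cannot vanish identically, giving injectivity. This faithfulness step is the main obstacle, and it is exactly where the division-algebra hypothesis and the nontriviality of $\pi$ on $\symd_\lambda(D,\sigma)$ enter.

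Granting this, the converse follows quickly. Let $f\colon V\to V'$ be a $D$-isometry, so $f$ is a right $D$-linear isomorphism with $q_{h',\pi}\circ f=q_{h,\pi}$; since $f$ is additive, passing to polar forms gives $\mathfrak{b}_{h',\pi}(f(u),f(v))=\mathfrak{b}_{h,\pi}(u,v)$ for all $u,v$. For every $d\in D$, using $f(vd)=f(v)d$ and the reconstruction formula twice,
\[\widetilde\pi\bigl(h'(f(u),f(v))d\bigr)=\mathfrak{b}_{h',\pi}(f(u),f(vd))=\mathfrak{b}_{h,\pi}(u,vd)=\widetilde\pi\bigl(h(u,v)d\bigr).\]
As this holds for all $d\in D$, the injectivity above forces $h'(f(u),f(v))=h(u,v)$ for all $u,v\in V$. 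Hence $f$ is an isometry of $\lambda$-hermitian spaces and $h\simeq h'$, completing the proof.
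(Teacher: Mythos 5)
Your proof is correct, and the converse direction takes a genuinely different route from the paper's. The paper works only with the values of $q_{h,\pi}$: from $q_{h',\pi}(f(v))=q_{h,\pi}(v)$ it deduces that $x:=h'(f(v),f(v))-h(v,v)$ satisfies $\sigma(d)xd\in\ker\pi$ for all $d\in D$, then invokes the polarization lemma (\cref{ker}: if $\tau(y)xy\in S$ for a unit $x\in\sym_\lambda$ and all $y$, then $\symd_\lambda\subseteq S$) to force $x=0$; having matched the diagonal values, it concludes by the classical fact that a $\lambda$-hermitian form vanishing on the diagonal is trivial when $D\neq F$ or $\lambda\neq-1$. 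You instead reconstruct $h$ from the polar form alone: your identity $\mathfrak{b}_{h,\pi}(u,vd)=\widetilde\pi\bigl(h(u,v)d\bigr)$, with $\widetilde\pi(a)=\pi(a+\lambda\sigma(a))$, together with the faithfulness of $a\mapsto\bigl(d\mapsto\widetilde\pi(ad)\bigr)$ --- which needs only $aD=D$ for $a\neq0$ and $\widetilde\pi\neq0$ --- pins down $h(u,v)$ for all pairs simultaneously. I checked your computations (well-definedness of $\widetilde\pi$, the reconstruction identity via $h(vd,u)=\lambda\sigma(h(u,v)d)$, and that a $D$-isometry preserves polar forms since $\mathfrak{b}_q(u,v)=q(u+v)-q(u)-q(v)$) and they are all sound. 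Your route buys two things: it is more elementary, replacing the quadratic polarization trick of \cref{ker} by mere invertibility in the division algebra and dispensing with the vanishing-on-the-diagonal fact; and it proves something formally stronger, namely that any $D$-linear isomorphism matching just the polar forms $\mathfrak{b}_{h,\pi}$ and $\mathfrak{b}_{h',\pi}$ --- in characteristic two strictly less data than matching the quadratic forms --- is already an isometry of the hermitian forms. What the paper's approach buys in exchange is a lemma (\cref{ker}) valid for arbitrary central simple algebras with involution, and an argument that never needs the polar form to be faithful on off-diagonal pairs, only the diagonal values of $q_{h,\pi}$.
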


\begin{proof}
If $f:(V,h)\simeq(V',h')$ is an isometry then $f$ is an isomorphism of right vector spaces satisfying
\begin{align*}
q_{h',\pi}(f(v))=\pi(h'(f(v),f(v)))=\pi(h(v,v))=q_{h,\pi}(v),
\end{align*}
for every $v\in V$.
Hence, $f:(V,q_{h,\pi})\rightarrow (V',q_{h',\pi})$ is a $D$-isometry.

Conversely, let $f:(V,q_{h,\pi})\simeq_D(V',q_{h',\pi})$ be a $D$-isometry.
We prove that $f:(V,h)\simeq(V',h')$ is an isometry.
Since $f:V\rightarrow V'$ is an isomorphism of right vector spaces over $D$, it suffices to show that
\begin{equation}\label{eq2}
h'(f(u),f(v))=h(u,v)\quad  {\rm for\ every} \  u,v \in V.
\end{equation}
The equality $q_{h',\pi}(f(v))=q_{h,\pi}(v)$ for $v\in V$ implies that
\[h'(f(v),f(v))-h(v,v)\in\ker \pi\quad {\rm for\ every}\ v\in V. \]
If $h'(f(v),f(v))\neq h(v,v)$ for some $v\in V$ then $x:=h'(f(v),f(v))-h(v,v)\in\ker \pi\cap \sym_\lambda(D,\sigma)$ is a nonzero element satisfying
\[\sigma(d)xd=h'(f(vd),f(vd))-h(vd,vd)\in \ker \pi\quad {\rm for\ every} \ d\in D.\]
Hence, $\symd_\lambda(D,\sigma)\subseteq \ker \pi$ by \cref{ker}, contradicting $\pi|_{\symd_\lambda(D,\sigma)}\neq0$.
Thus,
\begin{equation}\label{eq1}
h'(f(v),f(v))= h(v,v)\quad {\rm for\ every} \ v\in V.
\end{equation}
Finally, it is easily seen that the map $h'':V\times V\rightarrow D$ defined by
\[h''(u,v)=h(u,v)-h'(f(u),f(v)),\] is a $\lambda$-hermitian form.
Using (\ref{eq1}), one has $h''(v,v)=0$ for all $v\in V$.
The assumption $D\neq F$ or $\lambda\neq-1$ implies that $h''$ is trivial, hence $h'(f(u),f(v))=h(u,v)$ for all $u,v\in V$, proving (\ref{eq2}).
\end{proof}

\begin{remark}
\cref{isom} does not necessarily hold if $\pi|_{\symd_\lambda(D,\sigma)}$ is trivial.
This is obvious if $\car F\neq 2$, because $q_{h,\pi}$ is trivial in this case.
Suppose that $\car F=2$, so that $\symd(D,\sigma)\subsetneq \sym(D,\sigma)$.
Choose $\alpha\in\sym(D,\sigma)\setminus\symd(D,\sigma)$.
Consider one-dimensional hermitian forms $h$ and $h'$ on $V=D$ satisfying $h(1,1)=\alpha$ and $h'(1,1)=\alpha+\beta$, where $\beta\in\symd(D,\sigma)$ is a nonzero element.
Then for every $d\in D$, one has
\begin{align*}
q_{h',\pi}(d)&=\pi(h'(d,d))=\pi(\sigma(d)(\alpha+\beta)d)\\&=\pi(\sigma(d)\alpha d)=\pi(h(d,d))=q_{h,\pi}(d),
\end{align*}
because $\sigma(d)\beta d\in\symd(D,\sigma)$ and $\pi|_{\symd(D,\sigma)}=0$.
Hence, $q_{h',\pi}\simeq_Dq_{h,\pi}$.
However, $h$ and $h$ are not isometric, since otherwise there exists $d\in D$ such that $h(d,d)=\alpha+\beta$, hence
$\sigma(d)\alpha d=\alpha+\beta$.
It then follows that $\sigma(d)\alpha d+\alpha=\beta\in\symd(D,\sigma)$.
Set $x=\sigma(d+1)\alpha(d+1)\in D$.
Then
\begin{align}\label{eq6}
x=\sigma(d)\alpha d+\alpha+\sigma(d)\alpha+\alpha d=\sigma(d)\alpha d+\alpha+\alpha d+\sigma(\alpha d)\in{\symd}(D,\sigma).
\end{align}
Note that as $\beta\neq0$, we have $d\neq1$.
Hence, (\ref{eq6}) leads to the contradiction $\alpha=\sigma((d+1)^{-1})x(d+1)^{-1}\in\symd(D,\sigma)$.
\end{remark}

\begin{prop}\label{iso}
Let $(V,h)$ be a $\lambda$-hermitian space over $(D,\sigma)$.
A nonzero vector $v\in V$ is an isotropic vector of $h$ if and only if it is a $D$-isotropic vector of $q_{h,\pi}$.
In particular, $h$ is isotropic if and only if $q_{h,\pi}$ is $D$-isotropic.
\end{prop}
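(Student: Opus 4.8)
The plan is to establish the pointwise equivalence for a fixed nonzero $v\in V$ and then obtain the ``in particular'' clause by quantifying over all nonzero vectors. The starting observation is that for any $d\in D$, the hermitian identity $h(vd,vd)=\sigma(d)h(v,v)d$ gives
\[
q_{h,\pi}(vd)=\pi\bigl(h(vd,vd)\bigr)=\pi\bigl(\sigma(d)\,h(v,v)\,d\bigr).
\]
Thus the whole statement is really a statement about the single element $\alpha:=h(v,v)\in\sym_\lambda(D,\sigma)$, namely that $\alpha=0$ if and only if $\pi(\sigma(d)\alpha d)=0$ for all $d\in D$.

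The forward implication is then an immediate computation: if $v$ is isotropic for $h$, so that $h(v,v)=0$, the displayed formula yields $q_{h,\pi}(vd)=\pi(0)=0$ for every $d\in D$. Hence $q_{h,\pi}|_{vD}=0$ and $v$ is a $D$-isotropic vector of $q_{h,\pi}$.

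For the converse I would argue by contradiction, reusing the mechanism already deployed in the proof of \cref{isom}. Suppose $v$ is $D$-isotropic but $h(v,v)\neq0$. Set $x=h(v,v)$; it lies in $\sym_\lambda(D,\sigma)$ and, being nonzero in the division algebra $D$, is a unit. The $D$-isotropy of $v$ says precisely that $\sigma(d)\,x\,d\in\ker\pi$ for all $d\in D$. Applying \cref{ker} with $(A,\tau)=(D,\sigma)$ and $S=\ker\pi$ then forces $\symd_\lambda(D,\sigma)\subseteq\ker\pi$, contradicting the standing assumption that $\pi|_{\symd_\lambda(D,\sigma)}$ is nontrivial. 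Hence $h(v,v)=0$ and $v$ is isotropic for $h$.

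The only subtle point — and the step I expect to be the main obstacle — is this converse direction: one must notice that a nonzero value $h(v,v)$ is automatically a unit of $D$, which is exactly the hypothesis needed to invoke \cref{ker}; everything else is formal. With the pointwise equivalence in hand, the last sentence of the statement follows at once, since $h$ is isotropic if and only if it possesses a nonzero isotropic vector, if and only if $q_{h,\pi}$ possesses a nonzero $D$-isotropic vector, if and only if $q_{h,\pi}$ is $D$-isotropic.
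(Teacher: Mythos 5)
Your proof is correct, but the converse direction takes a different route from the paper's. Where you invoke \cref{ker} (with $A=D$, $\tau=\sigma$, $x=h(v,v)$ and $S=\ker\pi$) to derive the contradiction $\symd_\lambda(D,\sigma)\subseteq\ker\pi$, the paper argues directly: it picks $x\in\symd_\lambda(D,\sigma)$ with $\pi(x)\neq0$, writes $x=y+\lambda\sigma(y)$, sets $d=h(v,v)^{-1}y$, and computes $\mathfrak{b}_{h,\pi}(v,vd)=\pi\bigl(h(v,v)d+\lambda\sigma(h(v,v)d)\bigr)=\pi(x)\neq0$, contradicting the vanishing of the polar form on $vD$ (which follows from $q_{h,\pi}|_{vD}=0$). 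So the paper contradicts the vanishing of the \emph{polar} values $\mathfrak{b}_{h,\pi}(v,vd)$, while you contradict the vanishing of the \emph{quadratic} values $\pi(\sigma(d)h(v,v)d)$ and delegate the polarization to \cref{ker} — indeed the identity $\sigma(y+1)x(y+1)-\sigma(y)xy-x=xy+\lambda\sigma(xy)$ inside \cref{ker} is essentially the same trick, performed abstractly. Your version buys economy and uniformity: the single mechanism of \cref{ker} then serves both \cref{isom} and \cref{iso}, and your key observation — that $h(v,v)\neq0$ is automatically a unit of the division algebra, which is exactly the hypothesis \cref{ker} needs — is precisely right. The paper's version buys an explicit witness: it exhibits a concrete $d$ with $\mathfrak{b}_{h,\pi}(v,vd)\neq0$, showing the slightly sharper fact that when $h(v,v)\neq0$ the restriction $q_{h,\pi}|_{vD}$ is not even totally singular, not merely nonzero. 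Both arguments are complete; the forward direction and the deduction of the ``in particular'' clause coincide with the paper's.
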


\begin{proof}
If $v\in V$ is an isotropic vector of $h$, then $h(vd,vd)=0$ for all $d\in D$.
Hence, $q_{h,\pi}|_{vD}=0$, i.e., $v$ is a $D$-isotropic of $q_{h,\pi}$.

Conversely, suppose that $q_{h,\pi}|_{vD}=0$ for some nonzero vector $v\in V$.
We claim that $h(v,v)=0$.
Choose $x\in\symd_\lambda(D,\sigma)$ such that $\pi(x)\neq0$.
Write $x=y+\lambda\sigma(y)$ for some $y\in D$.
Suppose that $h(v,v)\neq0$ and set $d=h(v,v)^{-1}y\in D$.
Then
\begin{align*}
  \mathfrak{b}_{h,\pi}(v,vd)&=\pi(h(v,vd)+h(vd,v))\\
  &=\pi(h(v,v)d+\lambda\sigma(h(v,v)d))\\
  &=\pi(y+\lambda\sigma(y))=\pi(x)\neq0.
\end{align*}
This contradicts the assumption $q_{h,\pi}|_{vD}=0$.
\end{proof}

\begin{cor}\label{met}
Let $(V,h)$ be a $\lambda$-hermitian space over $(D,\sigma)$.
Then $h$ is metabolic if and only if $q_{h,\pi}$ is $D$-metabolic.
\end{cor}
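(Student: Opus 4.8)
The plan is to exhibit a single $D$-subspace $L\subseteq V$ with $\dim_D L=\frac12\dim_D V$ that serves at once as a lagrangian of $h$ and as a $D$-lagrangian of $q_{h,\pi}$, the two nonsingularity conditions corresponding to each other through \cref{reg}. With this setup the forward implication is immediate and the entire difficulty lies in the converse.

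Suppose first that $h$ is metabolic, with lagrangian $L$. For each $w\in L$ we have $h(w,w)=0$, so $q_{h,\pi}(w)=\pi(h(w,w))=0$, and hence $L$ is a totally isotropic $D$-subspace of $(V,q_{h,\pi})$. Since $h$ is nondegenerate, $q_{h,\pi}$ is nonsingular by \cref{reg}; as the dimension condition is unchanged, $L$ is a $D$-lagrangian and $q_{h,\pi}$ is $D$-metabolic.

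Conversely, assume $q_{h,\pi}$ is $D$-metabolic. Its nonsingularity gives the nondegeneracy of $h$ via \cref{reg}, and it remains to exhibit a lagrangian of $h$. I would fix a $D$-lagrangian $L$ of $q_{h,\pi}$ and show that $h|_{L\times L}=0$. The main obstacle is precisely to upgrade the vanishing of $q_{h,\pi}$ on $L$ to the vanishing of $h$ on $L\times L$, and I would do this in two stages, both already present in the proof of \cref{isom}. First, fix $w\in L$; since $L$ is a $D$-subspace, $wd\in L$ for all $d\in D$, whence $0=q_{h,\pi}(wd)=\pi(\sigma(d)h(w,w)d)$, i.e.\ $\sigma(d)h(w,w)d\in\ker\pi$ for every $d\in D$. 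Were $h(w,w)\neq0$, it would be a unit of $\sym_\lambda(D,\sigma)$, and \cref{ker} (applied with $A=D$, $\tau=\sigma$, $S=\ker\pi$) would give $\symd_\lambda(D,\sigma)\subseteq\ker\pi$, contradicting that $\pi|_{\symd_\lambda(D,\sigma)}$ is nontrivial. Hence $h(w,w)=0$ for all $w\in L$.

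Second, $h|_{L\times L}$ is now a $\lambda$-hermitian form with identically vanishing diagonal, and the standing hypothesis $D\neq F$ or $\lambda\neq-1$ forces any such form to be trivial --- this is exactly the fact invoked at the end of the proof of \cref{isom}. Concretely, polarizing $h(u+vd,u+vd)=0$ for $u,v\in L$ and $d\in D$ gives $h(u,v)d=\sigma(d)h(u,v)$ for all $d$, so a nonzero value $h(u,v)$ would realize $\sigma$ as conjugation by a unit, which an anti-automorphism of a noncommutative division algebra cannot be; thus $D=F$, forcing $\lambda=1$, and in that residual case $\car F\neq2$ disposes of it directly. Therefore $h|_{L\times L}=0$, so $L$ is a lagrangian of $h$ of the required dimension and, together with the nondegeneracy of $h$, this shows that $h$ is metabolic.
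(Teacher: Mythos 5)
Your proposal is correct and structurally identical to the paper's proof: both directions transfer a single subspace $L$ between the two notions of lagrangian, with the nondegeneracy/nonsingularity conditions matched via \cref{reg}. The only cosmetic deviation is in the converse, where the paper simply cites \cref{iso} to get $h(w,w)=0$ for $w\in L$ while you rederive that vanishing directly from \cref{ker} (the device from the proof of \cref{isom}), and where you additionally spell out the fact the paper leaves implicit --- that under the standing hypothesis $D\neq F$ or $\lambda\neq-1$ a $\lambda$-hermitian form with identically zero diagonal is trivial, your reduction to $\car F\neq 2$ in the residual case $D=F$ being justified since $\lambda\neq -1$ with $\lambda=\pm 1$ forces $1\neq -1$ in $F$.
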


\begin{proof}
Suppose first that $h$ is metabolic.
Then $h$ is nondegenerate, hence $q_{h,\pi}$ is nonsingular by \cref{quad}.
Let $L$ be a lagrangian of $h$.
Then $L$ is a $D$-subspace of $V$ satisfying $\dim_DL=\frac{1}{2}\dim_DV$ and $h|_{L\times L}=0$.
It follows that $q_{h,\pi}|_L=0$.
Hence, $L$  is a $D$-lagrangian of $q_{h,\pi}$, i.e., $q_{h,\pi}$ is $D$-metabolic.

Conversely, Suppose that $q_{h,\pi}$ is $D$-metabolic.
Then it is nonsingular, hence $h$ is nondegenerate by \cref{quad}.
Let $L$ be a $D$-lagrangian of $q_{h,\pi}$.
Since every $v\in L$ is a $D$-isotropic vector of $q_{h,\pi}$, \cref{iso} shows that $h(v,v)=0$ for every $v\in V$.
The assumption $D\neq F$ or $\lambda\neq-1$ then implies that $h|_{L\times L}=0$.
Hence, $L$ is a lagrangian of $(V,h)$, i.e., $h$ is metabolic.
\end{proof}

\begin{remark}
The `if' implications in \cref{iso} and \cref{met} are not necessarily true if $\pi|_{\symd_\lambda(D,\sigma)}$ is trivial.
This is obvious if $\car F\neq2$, because $q_{h,\pi}$ is trivial in this case.
If $\car F=2$, as already observed in \cref{remp}, the form $q_{h,\pi}$ is totally singular.
Hence, it cannot be metabolic, even if $h$ is metabolic.
This proves the claim for metabolicity.
Finally, to prove the claim for isotropy, let $(V,h)$ be an anisotropic hermitian space over $(D,\sigma)$ satisfying $h(v,v)\in\symd(D,\sigma)$ for some nonzero vector $v\in V$.
Since $\sigma(d)h(v,v)d\in\symd(D,\sigma)$ for every $d\in D$ we have
$q_{h,\pi}(vd)=\pi(\sigma(d)h(v,v)d)=0$ for all $d\in D$.
Hence, $q_{h,\pi}$ is $D$-isotropic.
\end{remark}

The following result is easily verified.
\begin{lemma}\label{pe}
If $(V,h)$ and $(V',h')$ are two $\lambda$-hermitian spaces over $(D,\sigma)$ then $q_{h\perp h',\pi}\simeq_D q_{h,\pi}\perp q_{h',\pi}$.
In particular, $q_{h,\pi}$ and $q_{h',\pi}$ are $D$-compatible.
\end{lemma}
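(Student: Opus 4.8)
The plan is to exhibit a single explicit $D$-isometry that simultaneously realizes the claimed $D$-isometry $q_{h\perp h',\pi}\simeq_D q_{h,\pi}\perp q_{h',\pi}$, from which the $D$-compatibility assertion is immediate by the definition of $D$-compatibility. Recall that $h\perp h'$ lives on $V\oplus V'$, while $q_{h,\pi}\perp q_{h',\pi}$ lives on the same underlying right $D$-vector space $V\oplus V'$; the only thing to check is that the values of the two quadratic forms agree under the identity map. So the natural candidate for the $D$-isometry is simply $f=\id_{V\oplus V'}$, which is trivially an isomorphism of right $D$-vector spaces.

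The key computation is then to verify the pointwise equality of values. For an arbitrary vector $(v,v')\in V\oplus V'$, I would unwind the definitions: by the definition of the orthogonal sum of hermitian forms, $(h\perp h')((v,v'),(v,v'))=h(v,v)+h'(v',v')$, so applying $\pi$ and using that $\pi$ is $F$-linear gives
\[
q_{h\perp h',\pi}((v,v'))=\pi\big(h(v,v)+h'(v',v')\big)=\pi(h(v,v))+\pi(h'(v',v'))=q_{h,\pi}(v)+q_{h',\pi}(v').
\]
The right-hand side is exactly $(q_{h,\pi}\perp q_{h',\pi})((v,v'))$, which establishes that the identity map is a $D$-isometry. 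Since $q_{h,\pi}$, $q_{h',\pi}$, and $q_{h\perp h',\pi}$ are all quadratic $D$-forms by \cref{quad}, and a quadratic form $D$-isometric to a quadratic $D$-form is itself a quadratic $D$-form, the displayed $D$-isometry shows in particular that $q_{h,\pi}\perp q_{h',\pi}$ is a quadratic $D$-form.

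For the final sentence of the statement, the $D$-compatibility of $q_{h,\pi}$ and $q_{h',\pi}$ is precisely the assertion that their orthogonal sum $q_{h,\pi}\perp q_{h',\pi}$ is a quadratic $D$-form, which we have just concluded. Honestly, there is no real obstacle here: the linearity of $\pi$ does all the work, and the only subtlety worth flagging is the conceptual point that the orthogonal sum of quadratic $D$-forms need not in general be a quadratic $D$-form (as \cref{rems} shows), so the $D$-compatibility here is a genuine consequence of the hermitian origin of these forms rather than a formality. This is exactly why the lemma bothers to state the ``in particular'' clause: it records that $\pi$-invariants of hermitian forms are automatically $D$-compatible, in contrast to the general situation.
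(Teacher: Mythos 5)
Your proof is correct and is precisely the verification the paper leaves implicit (it states only that the result ``is easily verified''): the identity map on $V\oplus V'$ is the $D$-isometry, the pointwise equality follows from the $F$-linearity of $\pi$, and $D$-compatibility follows since $q_{h\perp h',\pi}$ is a quadratic $D$-form by \cref{quad} and the quadratic $D$-form property is preserved under $D$-isometry. Your closing remark contrasting this with \cref{rems} is an apt observation and consistent with the paper's intent.
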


\begin{remark}
Let $(V,h)$ and $(V',h')$ be two $\lambda$-hermitian spaces over $(D,\sigma)$.
In the case where $(V,h)$ and $(V',h')$ are even, one can find a shorter proof of the `if' implication in \cref{isom} as follows (compare \cite[(4.5)]{meh}):
since every $\lambda$-hermitian form is an orthogonal sum of a zero form and a nondegenerate form, it suffices to prove the claim in the case where $h$ and $h'$ are nondegenerate.
If $q_{h,\pi}\simeq_Dq_{h',\pi}$ then using \cref{pe}, one has $q_{h\perp(-h'),\pi}\simeq_D q_{h,\pi}\perp -(q_{h',\pi})$.
Hence, $q_{h\perp(-h'),\pi}$ is $D$-metabolic by \cref{qq}, which implies that $h\perp(-h')$ is metabolic, thanks to \cref{met}.
It follows from \cite[Ch. I, (6.4.5)]{knus1} that $h\simeq h'$.
\end{remark}

Let $K/F$ be a finite field extension such that $D_K:=D\otimes_FK$ is a division algebra and let $(V,h)$ be a $\lambda$-hermitian space over $(D,\sigma)$.
Then there exists a $\lambda$-hermitian space $(V_K,h_K)$ over $(D_K,\sigma_K)$ satisfying
\[h_K(u\otimes a,v\otimes b)=h(u,v)\otimes ab\quad {\rm for \ all} \ u,v\in V\ {\rm and} \ a,b\in K,\]
where $\sigma_K=\sigma\otimes\id$.
Note that $\sym_\lambda(D_K,\sigma_K)=\sym_\lambda(D,\sigma)\otimes K$, hence the map $\pi$ induces a $K$-linear map
\[\pi_K:{\sym}_{\lambda}(D_K,\sigma_K)\rightarrow K,\]
satisfying $\pi_K(x\otimes a)=a\pi(x)$ for all $x\in \sym_\lambda(D,\sigma)$ and $a\in K$.
Therefore, we obtain a quadratic form
$q_{h_K,\pi_K}:V_K\rightarrow K$
satisfying
\[q_{h_K,\pi_K}(v\otimes a)=\pi_K(h(v,v)\otimes a^2)=\pi(h(v,v))a^2=q_{h,\pi}(v)a^2,\]
for all $v\in V$ and $a\in K$.
Clearly, the definition of $q_{h,\pi}$ is functorial, i.e., \[q_{h_K,\pi_K}\simeq(q_{h,\pi})_K.\]

Let $K/F$ be a field extension of odd degree and let $(V,h)$ be an anisotropic $\lambda$-hermitian space over $(D,\sigma)$.
In view of the functoriality of $q_{h,\pi}$, the problem of whether $h_K$ is anisotropic can be generalized to \cref{ques}.

\newpage
\section{Quadratic forms which admit $D$}\label{sec-admit}
We now fix $F$ as a field of characteristic not two and $(D,\sigma)$ as a division algebra with involution of the first kind over $F$.

Let $V$ be a finite dimensional right vector space over $D$ and let $q:V\rightarrow F$ be a quadratic form.
As in \cite[Ch. I, \S 7.4]{knus1}, we say that $q$ {\it admits} $D$ if
\[\mathfrak{b}_q(ud,v)=\mathfrak{b}_q(u,v\sigma(d))\quad {\rm for\ all} \ u,v\in V \ {\rm and} \ d\in D. \]

\begin{lemma}
Every quadratic form which admits $D$ is a quadratic $D$-form.
\end{lemma}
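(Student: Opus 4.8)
The plan is to unwind both definitions and verify directly that admitting $D$ forces each orthogonal complement to be closed under right multiplication by $D$. Recall that $W^\perp$ is automatically an $F$-subspace of $V$ for any subset $W$, since it is defined as the set of vectors orthogonal to $W$ under the bilinear form $\mathfrak{b}_q$. Therefore, given a $D$-subspace $W$ of $V$, the only thing to check in order to conclude that $W^\perp$ is a $D$-subspace is that $w \in W^\perp$ implies $wd \in W^\perp$ for every $d \in D$.

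First I would fix a $D$-subspace $W$, a vector $w \in W^\perp$, and a scalar $d \in D$, and aim to show $wd \in W^\perp$, i.e.\ that $\mathfrak{b}_q(wd, u) = 0$ for every $u \in W$. The key computation is a single application of the admissibility hypothesis to slide $d$ off the first argument and onto the second via $\sigma$:
\[
\mathfrak{b}_q(wd, u) = \mathfrak{b}_q(w, u\sigma(d)) \quad \text{for all } u \in W.
\]
The crucial observation is then that $u\sigma(d)$ again lies in $W$, because $W$ is a right $D$-subspace and $\sigma(d) \in D$. Since $w \in W^\perp$, the right-hand side vanishes, giving $\mathfrak{b}_q(wd, u) = 0$ for all $u \in W$, hence $wd \in W^\perp$.

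Combining this with the fact that $W^\perp$ is already an $F$-subspace, I would conclude that $W^\perp$ is stable under right multiplication by $D$, so it is a $D$-subspace of $V$. As $W$ was an arbitrary $D$-subspace, this establishes that $q$ is a quadratic $D$-form. I do not anticipate any genuine obstacle here: the proof is a one-line verification, and its only substantive content is recognizing that the involution $\sigma$ appearing in the admissibility condition keeps us inside $D$, so that the right $D$-module structure of $W$ closes the argument. The statement is essentially a reformulation of the admissibility identity, and no induction, dimension count, or case analysis on the characteristic is needed.
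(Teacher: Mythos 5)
Your proof is correct and is essentially identical to the paper's: both fix a $D$-subspace $W$ and $w\in W^\perp$, and use the single identity $\mathfrak{b}_q(wd,v)=\mathfrak{b}_q(w,v\sigma(d))=0$ for $v\in W$, noting that $v\sigma(d)\in W$ because $W$ is a right $D$-subspace. Nothing is missing.
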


\begin{proof}
Let $V$ be a finite dimensional right vector space over $D$ and let $q:V\rightarrow F$ be a quadratic form which admits $D$.
Let $W$ be a $D$-subspace of $V$ and let $w\in W^\perp$.
Then for every $d\in D$ and $v\in W$ we have
\[\mathfrak{b}_q(wd,v)=\mathfrak{b}_q(w,v\sigma(d))=0.\]
Hence, $wd\in W^\perp$, proving that $W^\perp$ is a $D$-subspace of $V$.
\end{proof}
\begin{exm}
Suppose that $D$ is a quaternion algebra and $\sigma$ is its canonical involution, defined by $\sigma(x)=\trd_D(x)-x$ for $x\in D$, where $\trd_D(x)$ is the reduced trace of $x$ in $D$.
Let $V$ be a finite dimensional right vector space over $D$ and let $q:V\rightarrow F$ be a quadratic form which admits $D$.
Then $q$ is isotropic if and only if it is $D$-isotropic.
This follows from the fact that
\[q(vd)=\frac{1}{2}\mathfrak{b}_q(vd,vd)=\frac{1}{2}\mathfrak{b}_q(v,vd\sigma(d))=\frac{1}{2}\mathfrak{b}_q(v,v)d\sigma(d)=0,\]
for all $v\in V$ and $d\in D$ (note that $d\sigma(d)\in F$ for every $d\in D$).
In particular, using Springer's theorem \cite[(18.5)]{elman}, one can find an affirmative answer to \cref{ques} in this special case.
\end{exm}
\begin{definition}
Let $\pi:\symd_\lambda(D,\sigma)\rightarrow F$ be an $F$-linear map.
We say that $\pi$ is {\it symmetric} if
\[\pi(xy+\lambda\sigma(xy))=\pi(yx+\lambda\sigma(yx))\quad {\rm for \ all} \ x,y\in D.\]
\end{definition}

\begin{lemma}\label{xy}
Let $A$ be a central simple algebra over $F$ and let $l:A\rightarrow F$ be an $F$-linear map.
If $l(xy)=l(yx)$ for all $x,y\in A$ then $l$ is a scalar multiple of the reduced trace $\trd_A:A\rightarrow F$.
\end{lemma}

\begin{proof}
By scalar extension to a splitting field, it is enough to consider the case where $A=M_n(F)$ is the matrix algebra.
Let $e_{ij}\in M_n(F)$ be the matrix whose $ij$-entry is $1$ and all other entries are zero.
Then for all $i,j,k$ we have $l(e_{ij})=l(e_{ik}e_{kj})=l(e_{kj}e_{ik})$.
Hence, $l(e_{ij})=0$ if $i\neq j$.
Also, taking $i=j$, one concludes that $l(e_{ii})=l(e_{kk})$ for all $i,k$.
Hence, $l=\alpha\cdot \trd_A$, where $\alpha=l(e_{11})\in F$.
\end{proof}

\begin{lemma}\label{ex}
Let $\pi:\symd_\lambda(D,\sigma)\rightarrow F$ be an $F$-linear map.
If $\pi$ is symmetric then there exists $\alpha\in F$ such that $\pi=\alpha\cdot\trd_D|_{\symd_\lambda(D,\sigma)}$.
\end{lemma}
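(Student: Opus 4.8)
The plan is to reduce \cref{ex} to \cref{xy} by extending $\pi$ from $\symd_\lambda(D,\sigma)$ to an $F$-linear functional on all of $D$ that behaves like a trace. Since we are in characteristic not two, for every $z\in D$ the element $z+\lambda\sigma(z)$ lies in $\symd_\lambda(D,\sigma)$, so I can set
\[l(z)=\tfrac12\,\pi\bigl(z+\lambda\sigma(z)\bigr)\qquad(z\in D).\]
As $\sigma$ is of the first kind it is $F$-linear, and hence so is $l$. The factor $\tfrac12$ is legitimate because $\car F\neq2$, and $z+\lambda\sigma(z)$ genuinely lands in the domain of $\pi$ by the very definition of $\symd_\lambda(D,\sigma)$; these are the only small points requiring care, and both are built into the setup.

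The crucial step, and the place where the symmetry hypothesis is consumed, is verifying that $l(xy)=l(yx)$ for all $x,y\in D$. This is immediate: by definition of $l$ and symmetry of $\pi$,
\[l(xy)=\tfrac12\,\pi\bigl(xy+\lambda\sigma(xy)\bigr)=\tfrac12\,\pi\bigl(yx+\lambda\sigma(yx)\bigr)=l(yx).\]
So the entire content of the lemma is packaged into choosing the right extension $l$; once it is in hand there is essentially nothing left to compute, and I would not expect any serious obstacle beyond isolating this construction.

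Having established $l(xy)=l(yx)$, \cref{xy} applies and produces a scalar $\alpha\in F$ with $l=\alpha\cdot\trd_D$. To finish I restrict back to $\symd_\lambda(D,\sigma)$: if $z\in\symd_\lambda(D,\sigma)$ then $\sigma(z)=\lambda z$, so $z+\lambda\sigma(z)=(1+\lambda^2)z=2z$ and therefore $l(z)=\pi(z)$. Combining the two identities gives $\pi(z)=\alpha\,\trd_D(z)$ for every $z\in\symd_\lambda(D,\sigma)$, that is, $\pi=\alpha\cdot\trd_D|_{\symd_\lambda(D,\sigma)}$, as desired. As a consistency check, note that when $\lambda=-1$ the space $\symd_{-1}(D,\sigma)$ consists of skew elements, on which $\trd_D$ vanishes because $\trd_D\circ\sigma=\trd_D$ for involutions of the first kind; the conclusion then reads $\pi=0$, in agreement with $\alpha\cdot\trd_D|_{\symd_{-1}(D,\sigma)}$ being the zero map.
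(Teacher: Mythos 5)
Your proof is correct and is essentially the paper's own argument: the same extension $l(z)=\tfrac12\pi(z+\lambda\sigma(z))$, the same use of symmetry to get $l(xy)=l(yx)$, an appeal to \cref{xy}, and the same restriction back to $\symd_\lambda(D,\sigma)$ via $z+\lambda\sigma(z)=2z$. The closing remark about $\lambda=-1$ is a nice sanity check that matches the paper's observation immediately after the lemma.
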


\begin{proof}
Define $l_\pi:D\rightarrow F$ via $l_\pi(x)=\frac{1}{2}\pi(x+\lambda \sigma(x))$.
Then $l_\pi$ is an $F$-linear map satisfying
\[l_\pi(xy)=\frac{1}{2}\pi(xy+\lambda\sigma(xy))=\frac{1}{2}\pi(yx+\lambda\sigma(yx))=l_\pi(yx),\]
for all $x,y\in D$.
By \cref{xy}, there exists $\alpha\in F$ such that $l_\pi=\alpha\cdot\trd_D$.
Note that if $x\in\symd_\lambda(D,\sigma)$ then
$l_\pi(x)=\frac{1}{2}\pi(x+\lambda\sigma(x))=\pi(x)$, hence $l_\pi|_{\symd_\lambda(D,\sigma)}=\pi$, proving the claim.
\end{proof}

Let $\pi:\symd_\lambda(D,\sigma)\rightarrow F$ be a symmetric $F$-linear map.
By \cref{ex}, $\pi$  is the restriction to $\symd_\lambda(D,\sigma)$ of a scalar multiple of $\trd_D$.
Since $\trd_D(\sigma(x))=\trd_D(x)$ for all $x\in D$, one concludes that $\pi$ is trivial if $\lambda=-1$.
Hence, for the rest of this section we only consider the case where $\lambda=1$ for studying symmetric linear maps.
\begin{lemma}\label{adm}
Let $(V,h)$ be a nontrivial hermitian space over $(D,\sigma)$ and let $\pi:\symd(D,\sigma)\rightarrow F$ be a nonzero linear map.
Then $q_{h,\pi}$ admits $D$ if and only if $\pi$ is symmetric.
\end{lemma}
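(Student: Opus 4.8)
The plan is to rewrite the condition ``$q_{h,\pi}$ admits $D$'' as an identity for $\pi$ that can be compared directly with the definition of symmetry, using the explicit description \eqref{eq3} of the polar form $\mathfrak{b}_{h,\pi}$ from \cref{quad}. Fix $u,v\in V$ and $d\in D$, and write $a=h(u,v)$; since $\lambda=1$ we have $h(v,u)=\sigma(a)$. First I would unwind both sides of the admissibility equation $\mathfrak{b}_{h,\pi}(ud,v)=\mathfrak{b}_{h,\pi}(u,v\sigma(d))$ by applying the defining identity $h(u\alpha,v\beta)=\sigma(\alpha)h(u,v)\beta$ together with $\sigma^2=\id$.

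Carrying out these computations gives
\begin{align*}
\mathfrak{b}_{h,\pi}(ud,v)&=\pi(h(ud,v)+h(v,ud))=\pi(\sigma(d)a+\sigma(a)d),\\
\mathfrak{b}_{h,\pi}(u,v\sigma(d))&=\pi(h(u,v\sigma(d))+h(v\sigma(d),u))=\pi(a\sigma(d)+d\sigma(a)).
\end{align*}
Setting $x=\sigma(d)$ and $y=a$, and using $\sigma(xy)=\sigma(y)\sigma(x)$, these two expressions are precisely $\pi(xy+\sigma(xy))$ and $\pi(yx+\sigma(yx))$. Hence the admissibility equation is exactly
\[\pi(xy+\sigma(xy))=\pi(yx+\sigma(yx)),\]
where, as $u,v,d$ vary, the element $x=\sigma(d)$ runs over all of $D$ (because $\sigma$ is bijective) while $y=h(u,v)$ runs over the image of $h$. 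So ``$q_{h,\pi}$ admits $D$'' is equivalent to this symmetry-type identity holding for all $x\in D$ and all $y$ in the image of $h$, whereas ``$\pi$ is symmetric'' is the same identity for all $x,y\in D$.

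With this dictionary the ``if'' direction is immediate: if $\pi$ is symmetric the displayed identity holds for all $x,y\in D$, in particular for $y$ in the image of $h$, so $q_{h,\pi}$ admits $D$. The only real content is the converse, where I must upgrade the identity from $y$ ranging over the image of $h$ to $y$ ranging over all of $D$; this surjectivity of $h$ is the step I expect to be the crux. Here I would use nontriviality of $(V,h)$ to choose $u_0,v_0\in V$ with $c:=h(u_0,v_0)\neq0$; then $h(u_0\alpha,v_0)=\sigma(\alpha)c$, and as $\alpha$ ranges over $D$ the element $\sigma(\alpha)c$ ranges over all of $D$, since $\sigma$ is bijective and right multiplication by the unit $c$ of the division algebra $D$ is bijective. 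Thus the image of $h$ is all of $D$, the admissibility identity becomes the full symmetry condition, and $\pi$ is symmetric.
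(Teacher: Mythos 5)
Your proof is correct, and its ``only if'' half coincides with the paper's: the paper performs exactly your chain of equalities with $d=\sigma(x)$ and, like you, uses nontriviality of $h$ to realize an arbitrary $y\in D$ as $h(u,v)$ (you make the surjectivity of $h$ onto $D$ explicit via $h(u_0\alpha,v_0)=\sigma(\alpha)c$, a step the paper leaves implicit). Where you genuinely diverge is the ``if'' direction. The paper does not argue directly: it invokes \cref{ex} to write a symmetric $\pi$ as $\alpha\cdot\trd_D|_{\symd(D,\sigma)}$ and then verifies the admissibility identity through the trace properties $\trd_D(xy)=\trd_D(yx)$ and $\trd_D\circ\sigma=\trd_D$. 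You instead observe that admissibility, once unwound via \eqref{eq3}, \emph{is} the symmetry identity $\pi(xy+\sigma(xy))=\pi(yx+\sigma(yx))$ restricted to $x\in D$ and $y$ in the image of $h$, so symmetry implies it trivially. Your route is more economical and self-contained: it turns both implications into two readings of a single dictionary, needs no appeal to \cref{ex} or \cref{xy} (and hence avoids the division by $2$ used in the proof of \cref{ex}, though the standing hypothesis $\car F\neq2$ makes that moot here), and shows in passing that the ``if'' direction does not even require $h$ to be nontrivial. What the paper's detour buys is the explicit identification $\pi=\alpha\cdot\trd_D$, which it needs again shortly afterwards in \cref{real}, so the paper is reusing machinery rather than minimizing hypotheses.
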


\begin{proof}
Suppose first that $q_{h,\pi}$ admits $D$.
Let $x,y\in D$ and set $d=\sigma(x)\in D$.
As $h$ is nontrivial there exist $u,v\in V$ such that $h(u,v)=y$.
Then
\begin{align*}
\pi(xy+\sigma(xy))&=\pi(\sigma(d)h(u,v)+\sigma(\sigma(d)h(u,v)))\\
&=\pi(h(ud,v)+\sigma(h(ud,v)))\\&=\pi(h(ud,v)+h(v,ud))\\
&=\mathfrak{b}_{h,\pi}(ud,v)=\mathfrak{b}_{h,\pi}(u,v\sigma(d))\\&=\pi(h(u,v\sigma(d))+h(v\sigma(d),u))\\
&=\pi(h(u,v)\sigma(d)+\sigma(h(u,v)\sigma(d)))\\&=\pi(yx+\sigma(yx)).
\end{align*}
Hence, $\pi$ is symmetric.
Conversely, suppose that $\pi$ is symmetric.
By \cref{ex}, $\pi=\alpha\cdot\trd_D$ for some $\alpha\in F$.
Hence, for every $u,v\in V$ and $d\in D$ we have
\begin{align*}
\mathfrak{b}_{h,\pi}(ud,v)&=\pi(h(ud,v)+h(v,ud))
\\&=\alpha{\trd}_D(\sigma(d)h(u,v)+h(v,u)d)\\
&=\alpha{\trd}_D(h(u,v)\sigma(d)+dh(v,u))\\
&=\alpha{\trd}_D(h(u,v\sigma(d))+h(v\sigma(d),u))\\
&=\pi(h(u,v\sigma(d))+h(v\sigma(d),u))=\mathfrak{b}_{h,\pi}(u,v\sigma(d)).\qedhere
\end{align*}
Hence, $q_{h,\pi}$ admits $D$.
\end{proof}

Let $(V,q)$ be a quadratic $D$-space.
Since the $\pi$-invariant of every hermitian form on $V$ is a quadratic $D$-form, a natural question is whether every quadratic $D$-form can be realised as the $\pi$-invariant of a hermitian form.
Using \cite[(2.3)]{lewis}, one can find a solution to this question in the case where $\pi$ is symmetric:

\begin{prop}\label{real}
Let $(V,q)$ be a quadratic $D$-space and let $\pi:\symd(D,\sigma)\rightarrow F$ be a nonzero symmetric $F$-linear map.
Then there exists a hermitian form $h$ on $V$ such that $q=q_{h,\pi}$ if and only if $q$ admits $D$.
\end{prop}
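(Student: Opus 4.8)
The plan is to prove the two implications separately. The forward (``only if'') direction is immediate from \cref{adm}, while the reverse (``if'') direction carries the content and rests on \cref{ex} together with \cite[(2.3)]{lewis}.

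For the ``only if'' part, suppose $q=q_{h,\pi}$ for some hermitian form $h$ on $V$. If $h$ is the zero form then $q$ is the zero form, which plainly admits $D$. If $h$ is nontrivial, then since $\pi$ is symmetric, \cref{adm} shows directly that $q_{h,\pi}=q$ admits $D$. Either way $q$ admits $D$.

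For the ``if'' part, assume that $q$ admits $D$. First I would use \cref{ex} to write $\pi=\alpha\cdot\trd_D|_{\symd(D,\sigma)}$ for some $\alpha\in F$, noting that $\alpha\neq0$ because $\pi\neq0$. The idea is to recover $h$ from its reduced trace form. To this end I would observe that the scaled polar form $\frac{1}{2\alpha}\mathfrak{b}_q$ is a symmetric $F$-bilinear form on $V$ which again admits $D$, since the defining relation $\mathfrak{b}_q(ud,v)=\mathfrak{b}_q(u,v\sigma(d))$ is preserved under scaling. Applying \cite[(2.3)]{lewis} to this form then yields a hermitian form $h$ on $V$ whose reduced trace form is $\frac{1}{2\alpha}\mathfrak{b}_q$, that is, $\trd_D(h(u,v))=\frac{1}{2\alpha}\mathfrak{b}_q(u,v)$ for all $u,v\in V$. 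It remains to check that this $h$ does the job. Using the formula (\ref{eq3}) for the polar form of $q_{h,\pi}$, the hermitian symmetry $h(v,u)=\sigma(h(u,v))$, and the identity $\trd_D\circ\sigma=\trd_D$, one computes
\[\mathfrak{b}_{h,\pi}(u,v)=\alpha\,\trd_D\big(h(u,v)+h(v,u)\big)=2\alpha\,\trd_D(h(u,v))=\mathfrak{b}_q(u,v).\]
Since $\car F\neq2$, a quadratic form is recovered from its polar form via $q(v)=\frac{1}{2}\mathfrak{b}_q(v,v)$, so the equality of polar forms forces $q_{h,\pi}=q$, as desired.

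The main obstacle, and the only genuinely nontrivial ingredient, is the realization step: that every symmetric $F$-bilinear form on $V$ admitting $D$ is the reduced trace form of some hermitian form over $(D,\sigma)$. This is precisely the role of \cite[(2.3)]{lewis}, whose content ultimately rests on the nondegeneracy of the reduced trace pairing on $D$; by contrast, the surrounding manipulations with $\trd_D$ and the polar form are routine. One should also keep in mind the compatibility of conventions: because $\mathfrak{b}_q$ is symmetric (it is the polar form of a quadratic form in characteristic $\neq2$), the form produced is hermitian rather than skew hermitian, matching the case $\lambda=1$ to which this section is restricted.
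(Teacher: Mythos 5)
Your proof is correct and takes essentially the same route as the paper's: the ``only if'' direction via \cref{adm}, and the ``if'' direction via \cref{ex} together with \cite[(2.3)]{lewis} applied to the trace pairing on $D$. The only cosmetic differences are that you absorb the scalar into the polar form (using $\frac{1}{2\alpha}\mathfrak{b}_q$, noting $\alpha\neq0$) before invoking Lewis, whereas the paper applies Lewis to the pairing $(x,y)\mapsto\alpha\trd_D(xy)$ and rescales $h$ by $\frac{1}{2}$ afterwards, and that you explicitly dispose of the trivial-form case excluded from \cref{adm}.
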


\begin{proof}
The `only if' implication follows from \cref{adm}.
Conversely, suppose that $\pi$ is symmetric.
Then $\pi=\alpha\cdot\trd_D$ for some $\alpha\in F$, thanks to \cref{ex}.
Since $\trd_D(xy)=\trd_D(yx)$ for all $x,y\in D$, the assignment $(x,y)\mapsto \alpha\trd_D(xy)$ induces a symmetric bilinear form $\mathfrak{b}:D\times D\rightarrow F$.
Note that $\mathfrak{b}$ is associative, i.e.,
\[\mathfrak{b}(x,yz)=\mathfrak{b}(xy,z)\quad {\rm for \ all} \ x,y,z\in D.\]
Hence, $D$ is a symmetric algebra, in the sense of \cite{lewis}.
By  \cite[(2.3)]{lewis} there exists a hermitian form $h:V\times V\rightarrow D$ such that
$\mathfrak{b}(h(x,y),1)=\mathfrak{b}_q(x,y)$ for all $x,y\in V$.
Hence, 
\[\alpha{\trd}_D(h(x,x))=\mathfrak{b}(h(x,x),1)=\mathfrak{b}_q(x,x)=2q(x)\quad {\rm for \ all\ } x\in V.\]
It follows that 
$\frac{1}{2}\pi(h(x,x))=q(x)$ for every $x\in V$.
 The scaled form $\frac{1}{2}\cdot h$ is therefore the required hermitian form.
\end{proof}
\footnotesize

\noindent{\sc A.-H. Nokhodkar, {\tt
    a.nokhodkar@kashanu.ac.ir},\\
Department of Pure Mathematics, Faculty of Science, University of Kashan, P.~O. Box 87317-51167, Kashan, Iran.}

\end{document}